\newtheorem{theorem}{Theorem}[section]
\newtheorem{lemma}[theorem]{Lemma}
\newtheorem{proposition}[theorem]{Proposition}
\newtheorem{corollary}[theorem]{Corollary}
\theoremstyle{definition}
\theoremstyle{remark}
\newcommand{\tr}{{\rm{tr}}}
\numberwithin{equation}{section}
\newtheorem*{theorem*}{Theorem}
\begin{document}
\title[Quadratic interpolation of the Heinz means]{ Quadratic interpolation of the Heinz means}

\author[F. Kittaneh]{Fuad Kittaneh}
\address{Department of Mathematics, The University of Jordan, Amman, Jordan}
\email{fkitt@ju.edu.jo}
\author[M.S. Moslehian]{Mohammad Sal Moslehian}
\address{Department of Pure Mathematics, Center Of Excellence in Analysis on Algebraic Structures (CEAAS), Ferdowsi University of Mashhad, P. O. Box 1159, Mashhad 91775, Iran}
\email{moslehian@um.ac.ir}
\author[M. Sababheh]{Mohammad Sababheh}
\address{Department of Basic Sciences, Princess Sumaya University for Technology, Amman, Jordan}
\email{sababheh@psut.edu.jo, sababheh@yahoo.com}

\subjclass[2010]{15A39, 15B48, 47A30, 47A63.}

\keywords{positive matrices, matrix means, norm inequalities, Heinz means.}
\maketitle
\begin{abstract}
The main goal of this article is to present several quadratic refinements and reverses of the well known Heinz inequality, for numbers and matrices, where the refining term is a quadratic function in the mean parameters. The proposed idea introduces a new approach to these inequalities, where polynomial interpolation of the Heinz function plays a major role. As a consequence, we obtain a new proof of the celebrated Heron-Heinz inequality proved by Bhatia, then we study an optimization problem to find the best possible refinement. As applications, we present matrix versions including unitarily invariant norms, trace and determinant versions.
\end{abstract}

\section{introduction}

The celebrated Heinz inequality states that
\begin{equation}\label{original_heinz_intro}
2\sqrt{ab}\leq a^{t}b^{1-t}+a^{1-t}b^{t}\leq a+b, a,b>0,\;0\leq t\leq 1.
\end{equation}
 In \cite{B1}, a matrix version of this  inequality was shows as follows
\begin{equation}\label{matrix_heinz_intro}
2\vert\vert\vert A^{\frac{1}{2}}XB^{\frac{1}{2}}\vert\vert\vert \leq\vert\vert\vert A^{t}XB^{1-t}+A^{1-t}XB^{t}\vert\vert\vert \leq \vert\vert\vert AX+XB\vert\vert\vert , 0\leq t\leq 1,
\end{equation}
where $X\in\mathbb{M}_n$, the algebra of all $n\times n$ complex matrices, and $A,B\in\mathbb{M}_n^{+}$, the cone of positive semidefinite matrices in $\mathbb{M}_n.$ In the setting of matrices, the notation $\vert\vert\vert \cdot\vert\vert\vert $ will be used for an arbitrary unitarily invariant norm on $\mathbb{M}_n.$ Recall that these are norms on $\mathbb{M}_n$ with the property $\vert\vert\vert UXV\vert\vert\vert =\vert\vert\vert X\vert\vert\vert $ for all $X\in\mathbb{M}_n$ and any unitary matrices $U,V\in\mathbb{M}_n.$

In the past few years, a considerable attention has been put towards refining or reversing these inequalities, and some  related inequalities. For example, in \cite{kitt}, the convexity of the function $t\mapsto \vert\vert\vert \leq\vert\vert\vert A^{t}XB^{1-t}+A^{1-t}XB^{t}\vert\vert\vert $ was utilized to find some refining terms of \eqref{matrix_heinz_intro}. Then in \cite{conde,sabjmi}, further refinements were obtained, modeling the same idea of \cite{kitt}; see also \cite{BM, KMSC}. For example, it was shown in \cite{kitt} that $$f(t)\leq f(t/2)\leq \frac{1}{t}\int_{0}^{t}f(x)dx\leq \frac{f(0)+f(1)}{2}\leq f(0),\;0\leq t\leq \frac{1}{2}$$ where $f(t)=\vert\vert\vert A^{t}XB^{1-t}+A^{1-t}XB^{t}\vert\vert\vert .$
On the other hand \cite{kitt_man,kittanehmanasreh} presented the refinement and reverse
\begin{align*}
\|A^{t}XB^{1-t}&+A^{1-t}XB^{t}\|_2^2+r(t)\|AX-XB\|_2^2\\
&\leq \|AX+XB\|_2^2\\
&\leq \|A^{t}XB^{1-t}+A^{1-t}XB^{t}\|_2^2+R(t)\|AX-XB\|_2^2, 0\leq t\leq 1,
\end{align*}
where $r(t)=\min\{t,1-t\}, R(t)=\max\{t,1-t\}$ and $\|\cdot\|_2$ is the Hilbert-Schmidt norm defined, for $A\in \mathbb{M}_n$, as follows
$$\|A\|_2^2=\sum_{i,j}|a_{ij}|^2=\tr(AA^*).$$

In the above refinements, and many others, $r(t)$ or $R(t)$ term is  linear in the parameter $t.$

Earlier, Bhatia \cite{bhatiaint} showed that, for $\alpha(t)=(1-2t)^2,$
\begin{equation}\label{heinz_heron_bhatia_intro}
H_t(a,b)\leq K_{\alpha(t)}(a,b), 0\leq t\leq 1,
\end{equation}
 where
$$H_t(a,b)=\frac{a^{t}b^{1-t}+a^{1-t}b^{t}}{2}\;{\text{and}}\;K_{t}(a,b)=(1-t)\sqrt{ab}+t\frac{a+b}{2}$$ are the Heinz and Heron means, respectively.

Notice that both the Heinz means $H_t(a,b)$ and the Heron means $K_t(a,b)$ interpolate between the geometric mean $a\#b:=\sqrt{ab}$ and the arithmetic mean $a\nabla b:=\frac{a+b}{2}.$\\
Inequality \eqref{heinz_heron_bhatia_intro} attracted researchers who investigated this inequality and some possible matrix versions. We refer the reader to  \cite{singh1, zoul} for some nice discussion and history of these inequalities.\\
Rewriting \eqref{heinz_heron_bhatia_intro}, we obtain the following refinement of the Heinz inequality
$$H_t(a,b)+4t(1-t)(a\nabla b-a\#b)\leq a\nabla b.$$ This last refinement has been explored recently in \cite{krnic}, where some matrix versions were obtained.

Our motivation of the current work begins with this last inequality and its relation to \eqref{heinz_heron_bhatia_intro}. So, our first concern is why $\alpha(t)$ is given this way, and is there any alternative? It turns out \eqref{heinz_heron_bhatia_intro} follows from a more general inequality that treats quadratic interpolation of the Heinz means. More precisely, if we let $H(t):=H_t(a,b)$ and we find the quadratic polynomial interpolating $H$ at $t=0,\frac{1}{2},1$, we obtain $K_{\alpha(t)}.$ Therefore, \eqref{heinz_heron_bhatia_intro} has its geometric meaning now.\\
But then, if this is the origin of \eqref{heinz_heron_bhatia_intro}, what about taking the quadratic polynomial interpolating $H_t(a,b)$ at $0,\tau,1$, for an arbitrary value $\tau\in (0,1).$ This idea will be the main work in this paper, where we describe these polynomials and their relation to the Heinz means. Then, we discuss the ``best'' possible choice of $\tau$. This decision will depend on the error between the Heinz means $H(t)$ and the quadratic polynomial $F_{\tau}(t)$. We will show that the 1-norm difference between $H(t)$ and $F_{\tau}(t)$ is minimized at the unique root of $8\tau^3-12\tau^2+1=0$ in $\left(0,\frac{1}{2}\right).$ This is an interesting result because this $\tau$ is independent of $a$ and $b.$ Our numerical experiments show that  other norms are minimized at values that depend on $a$ and $b$, which makes the 1-norm an interesting case.

To prove our results, we need to prove monotonicity of certain functions.

Our first main result in this paper states that if $\tau\in(0,1), r(\tau)=\min\{\tau,1-\tau\}, R(\tau)=1-r(\tau)$ and $ \nu\leq r(\tau)$ or
$\nu\geq R(\tau),$ we have the inequality
$$\frac{a\nabla b-H_{\tau}(a,b)}{\tau(1-\tau)}\leq \frac{a\nabla b-H_{\nu}(a,b)}{\nu(1-\nu)},$$ while we have the reversed inequality when  $r(\tau)\leq\nu\leq R(\tau)$. Then letting $\tau=\frac{1}{2}$ implies \eqref{heinz_heron_bhatia_intro}. Therefore, this is a generalization and a new proof of \eqref{heinz_heron_bhatia_intro}. This last inequality can be thought of as a quadratic refinement and  reverse of the Heinz inequality. Then this idea is explored to obtain squared versions and multiplicative versions.

Once these numerical results are proved, we present their matrix versions, where unitarily invariant norms, trace and determinants are involved.
Some matrix versions are as follows. For certain $\tau,\nu$, one has
\begin{align*}
\|A^{\nu}XB^{1-\nu}+A^{1-\nu}XB^{\nu}\|_2^2&+\frac{\nu(1-\nu)}{\tau(1-\tau)}\left[\|AX+XB\|_2^2-\|A^{\tau}XB^{1-\tau}+A^{1-\tau}XB^{\tau}\|_2^2\right]\\
&\leq \|AX+XB\|_2^2,
\end{align*}
which is a quadratic refinement of the matrix Heinz inequality. If we let $\tau=\frac{1}{2}$ in this inequality, we obtain a recent result of Krni\'c \cite{krnic}.
Another matrix version for any unitarily invariant norm will be
\begin{align*}
&\vert\vert\vert A^{\nu}XB^{1-\nu}+A^{1-\nu}XB^{\nu}\vert\vert\vert \\
&+\frac{\nu(1-\nu)}{\tau(1-\tau)}\left[\left(\vert\vert\vert AX\vert\vert\vert +\vert\vert\vert XB\vert\vert\vert \right)-
\left(\vert\vert\vert AX\vert\vert\vert ^{\tau}\vert\vert\vert XB\vert\vert\vert ^{1-\tau}+\vert\vert\vert AX\vert\vert\vert ^{1-\tau}\vert\vert\vert XB\vert\vert\vert ^{\tau}\right)\right]\\
&\leq \vert\vert\vert AX\vert\vert\vert +\vert\vert\vert XB\vert\vert\vert .
\end{align*}
Further results about the determinant and the trace will be presented too.

For the notations adopted in this paper, we use
$$a\nabla_tb=(1-t)a+tb\;{\text{and}}\;a\#_{t}b=a^{1-t}b^{t}$$ for the weighted scalar arithmetic and geometric means, while
$$A\nabla_{t}B=(1-t)A+tB\;{\text{and}}\;A\#_{t}B=A^{\frac{1}{2}}\left(A^{-\frac{1}{2}}BA^{-\frac{1}{2}}\right)^{t}A^{\frac{1}{2}}$$ will be used for the matrix arithmetic and geometric means, when $A,B\in\mathbb{M}_n^{++}$, the cone of positive definite matrices in $\mathbb{M}_n.$

\section{Main Results}

\subsection{Scalar Results} In this part of the paper, we present the scalar results that we need to accomplish the matrix versions. The main tool in proving the scalar results is some delicate and tricky computations.

\subsubsection{The main scalar results}

\begin{theorem}\label{first_monoton_theorem}
For $c>0,$ let $$f(t)=\frac{1+c-(c^t+c^{1-t})}{t(1-t)}.$$ Then $f$ is decreasing on $\left(0,\frac{1}{2}\right)$ and is increasing on
$\left(\frac{1}{2},1\right).$
\end{theorem}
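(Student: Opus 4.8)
The plan is to prove the statement by showing that $f'(t) < 0$ on $(0,\tfrac12)$ and $f'(t) > 0$ on $(\tfrac12,1)$, but attacking $f'$ directly is messy because of the $t(1-t)$ in the denominator. Instead I would first record the symmetry $f(t) = f(1-t)$, which follows immediately from the fact that both the numerator $1+c-(c^t+c^{1-t})$ and the denominator $t(1-t)$ are invariant under $t\mapsto 1-t$. Because of this symmetry it suffices to prove that $f$ is decreasing on $\left(0,\tfrac12\right)$; the behaviour on $\left(\tfrac12,1\right)$ then follows automatically. A useful reduction: by replacing $c$ with $1/c$ if necessary we may assume $c \ge 1$ (note $c^t + c^{1-t}$ is unchanged under $c \mapsto 1/c$ up to the substitution already folded into the symmetry—more precisely $(1/c)^t + (1/c)^{1-t}$ equals $c^{-t}+c^{-(1-t)}$, so one should be a little careful here; the cleanest route is to just keep $c>0$ arbitrary and not reduce).

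Next I would write $g(t) = 1+c-(c^t+c^{1-t})$ for the numerator and $h(t) = t(1-t)$ for the denominator, noting $g(0)=g(1)=0$, $g > 0$ on $(0,1)$ (this is the scalar Heinz inequality, the left half of \eqref{original_heinz_intro}), $h(0)=h(1)=0$, and $h>0$ on $(0,1)$; so $f = g/h$ is positive and well-defined on $(0,1)$. The derivative condition $f' < 0$ on $(0,\tfrac12)$ is equivalent to $g'(t)h(t) - g(t)h'(t) < 0$ there. Define $\phi(t) = g'(t)h(t) - g(t)h'(t)$. We have $\phi(t) = -(\ln c)(c^t - c^{1-t})\,t(1-t) - \big(1+c-(c^t+c^{1-t})\big)(1-2t)$. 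The goal becomes: show $\phi(t) < 0$ for $t\in(0,\tfrac12)$. I would do this by examining $\phi$ at the endpoints and using convexity/sign-of-derivative arguments: $\phi(0) = -(1+c-1-c)\cdot 1 = 0$ and $\phi(\tfrac12) = 0$ as well (the first term vanishes since $c^{1/2}-c^{1/2}=0$, the second since $1-2t=0$). So $\phi$ vanishes at both ends of $(0,\tfrac12)$, and it suffices to show $\phi$ is, say, strictly concave on $(0,\tfrac12)$ — or more robustly, that $\phi$ has no interior zero on $(0,\tfrac12)$ and is negative at one interior point — to conclude $\phi < 0$ on the open interval.

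The main obstacle will be controlling the sign of $\phi$ (equivalently of a second or third derivative) on $(0,\tfrac12)$, since $\phi$ mixes the exponential terms $c^t, c^{1-t}$ with polynomial factors and its sign is not manifest. I expect the cleanest path is to differentiate once more: compute $\phi'(t)$, show $\phi'(\tfrac12) = 0$ and analyze $\phi''$, or alternatively factor out a positive quantity and reduce to an inequality of the form $\psi(t) \le \psi(\tfrac12)$ for a single-variable auxiliary function whose monotonicity can be checked by one more differentiation. Concretely, I would try to rewrite the inequality $\phi(t)<0$ as
\[
\frac{(\ln c)(c^t - c^{1-t})}{2t-1} < \frac{1+c-(c^t+c^{1-t})}{t(1-t)} = f(t),
\]
valid for $t \in (0,\tfrac12)$ where $2t-1<0$ and $c^t - c^{1-t}$ has the sign of $1-c$... — this suggests comparing $f$ with the related function $\tilde f(t) = \dfrac{(\ln c)(c^{1-t}-c^{t})}{1-2t}$ and showing $f(t) > \tilde f(t)$ on $(0,\tfrac12)$, which should be amenable to a power-series expansion in $(\ln c)$ or to a substitution $s = 1-2t$ that symmetrizes things. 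A series argument: writing $x = \tfrac12 \ln c$ and $u = (1-2t)/2$ so that $t = \tfrac12 - u$, one finds $c^t + c^{1-t} = 2\cosh(x)\cosh(2ux)$ roughly (up to bookkeeping), $1+c = 2\cosh(x)\cdot$ constant, and the whole inequality should reduce to a clean comparison of $\cosh$ and $\sinh$ type series with manifestly positive coefficients; that is the step I would expect to take the most care, but it should go through because every term in the resulting series has a fixed sign. Once $f$ is shown decreasing on $(0,\tfrac12)$, the symmetry $f(t)=f(1-t)$ finishes the proof.
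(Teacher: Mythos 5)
Your route is genuinely different from the paper's. The paper writes $f(t)=g(t)+g(1-t)$ with $g(t)=\frac{1\nabla_t c-1\#_t c}{t(1-t)}$, proves $g$ is convex on $(0,1)$ by an explicit chain of derivative sign computations, and then gets the monotonicity pattern from convexity of $f$, the symmetry $f(t)=f(1-t)$, and $f'\left(\frac{1}{2}\right)=0$. You instead attack the sign of the numerator $\phi(t)=g'(t)h(t)-g(t)h'(t)$ of $f'$ (your $g,h$ being numerator and denominator of $f$) and reduce, correctly, to the comparison $\tilde f(t)<f(t)$ on $\left(0,\frac{1}{2}\right)$, since for $t<\frac12$ one divides $\phi(t)<0$ by $t(1-t)(1-2t)>0$. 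One side remark is backwards: with $\phi(0)=\phi\left(\frac12\right)=0$, strict concavity of $\phi$ would force $\phi\ge 0$ on the interval, the opposite of what you want; you would need convexity. You drop this in favor of the $\tilde f$ comparison, so it does not infect the main line.

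The genuine gap is that the decisive inequality $f>\tilde f$ is only asserted to ``go through'' by a series argument, and the bookkeeping you sketch is not right: with $c=e^{2x}$ and $s=1-2t$ one has $c^{t}+c^{1-t}=2e^{x}\cosh(xs)$ and $1+c=2e^{x}\cosh x$ (not $2\cosh(x)\cosh(2ux)$), so after the factors of $2e^{x}$ cancel, the claim to be proved is $\frac{x(1-s^{2})\sinh(xs)}{s}\le 2\left(\cosh x-\cosh(xs)\right)$ for $s\in(0,1)$ and all real $x$. This is true and can be closed exactly in the spirit you propose: comparing coefficients of $x^{2m}$ on both sides, the required inequality is $m\,s^{2m-2}\le 1+s^{2}+\cdots+s^{2m-2}$, evident for $0<s<1$ (with equality only at $m=1$). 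But this is a genuine coefficientwise comparison, not merely ``every term in the series has a fixed sign,'' and as written your proposal stops before establishing it; since this is the analytic heart of the theorem, the proof is incomplete until that step is carried out. For comparison, the paper's convexity-of-$g$ argument avoids series at the cost of a longer nested derivative analysis, and its convexity conclusion is reused later (it yields the reverse inequality \eqref{log_reverse_heinz}), whereas your first-derivative route, once completed as above, proves exactly the stated monotonicity and nothing more.
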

\begin{proof}
Notice that $f(t)=g(t)+g(1-t),$ where $$g(t)=\frac{1\nabla_{t}c-1\#_{t}c}{t(1-t)}.$$ We prove that $g$ is convex on $(0,1),$ then we use
this observation to prove the stated facts about $f$. Direct Calculus computations show that
$$g''(t)=-\frac{h(c)}{t^3(1-t)^3},$$ where
$$h(c)=-2 + 2 t (3 + t (-3 + t - c t)) +  c^t (2 + 6 (-1 + t) t + (-1 + t) t \log c (2 - 4 t + (-1 + t) t \log c)).$$ Then
$$h'(c)=\frac{t^3}{c}k(t)\;{\text{where}}\;k(t)=-2 c + c^t (2 + (-1 + t) \log c (-2 + (-1 + t) \log c))$$
and $$k'(t)=c^t(1-t)^2\log^3c.$$
We discuss two cases:\\
{\bf{Case I:}} If $c>1,$ then clearly $k'(t)>0$ and $k$ is an increasing function of $t\in (0,1).$
In particular, $k(t)\leq k(1)=0$,  hence $h'(c)<0$ and $h$ is decreasing in $c\in (1,\infty).$ That is, $h(c)\leq h(1)=0.$ Now since $h(c)\leq 0$
and $0<t<1,$ we infer that $g''(t)\geq 0,$ when $c>1.$\\
{\bf{Case II:}} If $0<c<1,$ then clearly $k'(t)<0$ and $k$ is decreasing in $t\in (0,1).$ That is, $k(t)\geq k(1)=0$ and $h'(c)\geq 0$.
 Since $h$ is increasing in $c\in (0,1),$ we have $h(c)\leq h(1)=0$ and hence, $g''(t)\geq 0.$

Thus, we have shown that, for any $c>0$, $g$ is convex on $(0,1).$ Now since $f(t)=g(t)+g(1-t),$ $f$ is clearly convex. Notice that $f(0)=f(1)$.
Since $f$ is convex it follows that either $f$ is monotone on $(0,1)$ or is decreasing on $(0,t_0)$ and is increasing on $(t_0,1)$ for some $0<t_0<1.$
But since $f(0)=f(1)$, we have the later case. Thus, there exists $t_0\in (0,1)$ with the above monotonicity property. We assert that $t_0=\frac{1}{2}.$
 Notice first that $f'\left(\frac{1}{2}\right)=0.$ By Taylor theorem, for any $t\in (0,1)$,
 there exists $\xi_t$ between $\frac{1}{2}$ and $t$ such that
 $$f(t)=f\left(\frac{1}{2}\right)+\frac{f''(\xi_t)}{2}\left(t-\frac{1}{2}\right)^2\geq f\left(\frac{1}{2}\right),$$ where the last inequality follows
 from the fact that $f''>0.$ This proves that $f$ attains its minimum at $t_0=\frac{1}{2}.$ This completes the proof.
\end{proof}

  \begin{corollary}
  Let $a,b>0$ and $0\leq t\leq 1.$ Then the following quadratic refinement of Heinz inequality holds
\begin{equation}\label{first_heinz}
\left(a^tb^{1-t}+a^{1-t}b^t\right)+4t(1-t)(\sqrt{a}-\sqrt{b})^2\leq a+b.
\end{equation}
  \end{corollary}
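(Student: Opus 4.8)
The plan is to obtain this as a direct corollary of Theorem \ref{first_monoton_theorem}, after a homogeneity reduction. First I would observe that both sides of \eqref{first_heinz} are homogeneous of degree one in $(a,b)$, so it suffices to fix $a>0$ and write $b=ca$ with $c=b/a>0$. Dividing through by $a$ and using the identities $a^tb^{1-t}+a^{1-t}b^t=a(c^t+c^{1-t})$, $a+b=a(1+c)$, and
$$(\sqrt a-\sqrt b)^2=a\left(1+\tfrac{b}{a}-2\sqrt{\tfrac{b}{a}}\right)=a(1+c-2\sqrt c),$$
the assertion \eqref{first_heinz} becomes
$$\left(c^t+c^{1-t}\right)+4t(1-t)(1+c-2\sqrt c)\leq 1+c,\qquad 0\leq t\leq 1.$$

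Next I would dispose of the endpoints: at $t=0$ and $t=1$ the factor $4t(1-t)$ vanishes while $c^t+c^{1-t}=1+c$, so equality holds. For $t\in(0,1)$ we have $t(1-t)>0$, and dividing the displayed inequality by $t(1-t)$ shows it is equivalent to
$$4(1+c-2\sqrt c)\leq \frac{1+c-(c^t+c^{1-t})}{t(1-t)}=f(t),$$
where $f$ is exactly the function appearing in Theorem \ref{first_monoton_theorem} (with the role of $c$ there played by our $c=b/a$).

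Finally, Theorem \ref{first_monoton_theorem} asserts that $f$ is decreasing on $\left(0,\frac12\right)$ and increasing on $\left(\frac12,1\right)$, hence $f(t)\geq f\!\left(\frac12\right)$ for every $t\in(0,1)$. A direct computation gives
$$f\!\left(\tfrac12\right)=\frac{1+c-2c^{1/2}}{1/4}=4(1+c-2\sqrt c),$$
which is precisely the left-hand side of the inequality to be proved, so the estimate follows; scaling back by $a$ recovers \eqref{first_heinz}. I do not anticipate a genuine obstacle here, since all the analytic content is already contained in Theorem \ref{first_monoton_theorem}; the only points needing care are the homogeneity normalization, the bookkeeping identity for $(\sqrt a-\sqrt b)^2$, and verifying that the minimum value $f\!\left(\frac12\right)$ reproduces the refining term $4t(1-t)(\sqrt a-\sqrt b)^2$ exactly.
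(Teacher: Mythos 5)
Your proposal is correct and is exactly the paper's argument: reduce by homogeneity to the one-variable function $f(t)=\frac{1+c-(c^t+c^{1-t})}{t(1-t)}$, invoke Theorem \ref{first_monoton_theorem} to get $f(t)\geq f\left(\frac{1}{2}\right)=4(1+c-2\sqrt{c})$, and rearrange. The only difference is that you spell out the endpoint cases and the scaling bookkeeping that the paper leaves implicit.
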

\begin{proof}
 For $c=\frac{a}{b},$ let $f(t)=\frac{1+c-(c^t+c^{1-t})}{t(1-t)}.$ Then $f$ attains its minimum at $t_0=\frac{1}{2}.$ That is,
$f\left(t\right)\geq f\left(\frac{1}{2}\right).$ Simplifying this simple inequality implies the result.
\end{proof}
In particular, we obtain the following Heinz-Heron mean inequality. The proof follows immediately by simplifying \eqref{first_heinz}.
\begin{corollary}
 Let $a,b>0, 0\leq t\leq 1$ and let $H_t(a,b)$ and $K_t(a,b)$ denote the Heinz and Heron means respectively. Then
\begin{equation}\label{heron_heinz_bhatia}
H_t(a,b)\leq K_{\alpha(t)}(a,b),\;{\text{where}}\;\alpha(t)=1-4t(1-t).
\end{equation}
  \end{corollary}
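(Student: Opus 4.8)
The plan is to obtain \eqref{heron_heinz_bhatia} as a purely algebraic reformulation of the quadratic refinement \eqref{first_heinz}, with no further analysis required: everything analytic has already been done in Theorem \ref{first_monoton_theorem} and the corollary that produced \eqref{first_heinz}. The only ingredient beyond \eqref{first_heinz} is the elementary identity $(\sqrt a-\sqrt b)^2=a+b-2\sqrt{ab}$, which lets us rewrite the refining term of \eqref{first_heinz} in terms of the geometric and arithmetic means.

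First I would record the two bookkeeping identities $a^tb^{1-t}+a^{1-t}b^t=2H_t(a,b)$ and $(\sqrt a-\sqrt b)^2=2\bigl(\tfrac{a+b}{2}-\sqrt{ab}\bigr)=2(a\nabla b-a\# b)$. Substituting both into \eqref{first_heinz} gives
\[
2H_t(a,b)+8t(1-t)\,(a\nabla b-a\# b)\le a+b=2(a\nabla b),
\]
and dividing by $2$ yields the mean form
\[
H_t(a,b)\le a\nabla b-4t(1-t)\,(a\nabla b-a\# b)=\bigl(1-4t(1-t)\bigr)(a\nabla b)+4t(1-t)(a\# b).
\]
Finally, putting $\alpha(t)=1-4t(1-t)$ and comparing with $K_s(a,b)=(1-s)\sqrt{ab}+s\,\tfrac{a+b}{2}=(1-s)(a\# b)+s(a\nabla b)$, the right-hand side above is exactly $K_{\alpha(t)}(a,b)$, which is \eqref{heron_heinz_bhatia}.

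There is essentially no obstacle here; the substantive content sits in \eqref{first_heinz}. The only points worth flagging are cosmetic: one should note that $\alpha(t)=1-4t(1-t)=(1-2t)^2$, so \eqref{heron_heinz_bhatia} literally coincides with Bhatia's inequality \eqref{heinz_heron_bhatia_intro}; and that $\alpha$ maps $[0,1]$ onto $[0,1]$ with $\alpha(0)=\alpha(1)=1$, $\alpha(1/2)=0$, consistently with $H_0=H_1=a\nabla b$ and $H_{1/2}=a\# b$, so \eqref{heron_heinz_bhatia} is an equality at $t=0,\tfrac12,1$ and $K_{\alpha(t)}$ is a legitimate Heron mean. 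I would present the computation in exactly the order above, stating \eqref{first_heinz}, applying the identity for $(\sqrt a-\sqrt b)^2$, dividing by $2$, and reading off $\alpha(t)$.
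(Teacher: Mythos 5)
Your proposal is correct and is exactly the paper's route: the paper proves \eqref{heron_heinz_bhatia} by ``simplifying \eqref{first_heinz}'', and your computation (rewriting $(\sqrt a-\sqrt b)^2$ as $2(a\nabla b-a\#b)$, dividing by $2$, and reading off $\alpha(t)=1-4t(1-t)=(1-2t)^2$) is precisely that simplification, spelled out. No gaps.
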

Thus, this is another proof of the well known inequality \eqref{heinz_heron_bhatia_intro} proved by Bhatia in \cite{bhatiaint}. In fact, even this follows from a more
general comparison of the Heinz means. The following result presents a quadratic refinement and reverse of the Heinz inequality.
\begin{corollary}\label{ref_rever_heinz_cor}
 Let $a,b>0$ and $0<\nu<\tau\leq \frac{1}{2}.$ Then
\begin{equation}\label{second_heinz}
\frac{a\nabla b-H_{\tau}(a,b)}{\tau(1-\tau)}\leq \frac{a\nabla b-H_{\nu}(a,b)}{\nu(1-\nu)}.
\end{equation}
The inequality is reversed if $\frac{1}{2}\leq \nu<\tau<1.$
\end{corollary}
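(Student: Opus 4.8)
The plan is to reduce the statement entirely to Theorem~\ref{first_monoton_theorem} by exploiting the fact that both sides of \eqref{second_heinz} are homogeneous in $(a,b)$. Setting $c=a/b$, I would first record the elementary identities $a\nabla b=\frac{b}{2}(1+c)$ and $H_{\tau}(a,b)=\frac{b}{2}\left(c^{\tau}+c^{1-\tau}\right)$, so that
$$\frac{a\nabla b-H_{\tau}(a,b)}{\tau(1-\tau)}=\frac{b}{2}\cdot\frac{1+c-\left(c^{\tau}+c^{1-\tau}\right)}{\tau(1-\tau)}=\frac{b}{2}\,f(\tau),$$
where $f$ is precisely the function from Theorem~\ref{first_monoton_theorem}. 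Running the same computation with $\nu$ in place of $\tau$ shows that, after cancelling the positive factor $\frac{b}{2}$, inequality \eqref{second_heinz} is equivalent to $f(\tau)\leq f(\nu)$.

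Next I would simply invoke Theorem~\ref{first_monoton_theorem}. Since $f$ is decreasing on $\left(0,\frac{1}{2}\right)$ and, by the proof of that theorem, attains its minimum at $\frac12$, it is in fact non-increasing on $\left(0,\frac{1}{2}\right]$; hence $0<\nu<\tau\leq\frac{1}{2}$ gives $f(\tau)\leq f(\nu)$, which is \eqref{second_heinz}. For the reversed statement, the theorem asserts that $f$ is increasing on $\left(\frac{1}{2},1\right)$ (indeed non-decreasing on $\left[\frac{1}{2},1\right)$), so $\frac{1}{2}\leq\nu<\tau<1$ yields $f(\nu)\leq f(\tau)$ and the inequality flips. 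The only mild points to check are that dividing by $b$ is harmless because $b>0$, and that the endpoint $\tau=\frac{1}{2}$ (resp. $\nu=\frac{1}{2}$) is included by the non-strict monotonicity just noted, or by continuity of $f$ at $\frac12$.

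Since the substantive work — the convexity of $g$, the identity $f(t)=g(t)+g(1-t)$, and the localization of the minimum of $f$ at $t_0=\frac{1}{2}$ — is already contained in Theorem~\ref{first_monoton_theorem}, there is no real obstacle here. The ``hard part'' is purely bookkeeping: tracking the direction of the inequality correctly through the substitution $c=a/b$, and applying each monotonicity interval of $f$ on the correct side of $\frac{1}{2}$.
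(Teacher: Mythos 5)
Your proof is correct and follows exactly the paper's own argument: substitute $c=a/b$, observe that each side of \eqref{second_heinz} equals $\frac{b}{2}f(\tau)$ resp.\ $\frac{b}{2}f(\nu)$ for the function $f$ of Theorem~\ref{first_monoton_theorem}, and apply its monotonicity on $\left(0,\frac12\right)$ and $\left(\frac12,1\right)$. Your extra remark about handling the endpoint $\tau=\frac12$ (or $\nu=\frac12$) via continuity is a small point the paper leaves implicit, but nothing of substance differs.
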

\begin{proof}
For $c=\frac{a}{b},$ the function $f(t)=\frac{1+c-(c^{t}+c^{1-t})}{t(1-t)}$ is decreasing when $t<\frac{1}{2}$ and is increasing when $t>\frac{1}{2}.$
Now if $\nu<\tau\leq \frac{1}{2}$, we have $f(\tau)\leq f(\nu)$, which implies the desired inequality in this case. On the other hand, if
$\frac{1}{2}\leq \nu<\tau,$ then $f(\tau)\geq f(\nu),$ which implies the reversed inequality.
\end{proof}

Since the functions we are dealing with are symmetric about $t=\frac{1}{2},$ a full comparison is as follows.
\begin{corollary}\label{full_comp_heinz}
 Let $a,b>0$ and fix $\tau\in (0,1).$ Then
$$\frac{a\nabla b-H_{\tau}(a,b)}{\tau(1-\tau)}\leq \frac{a\nabla b-H_{\nu}(a,b)}{\nu(1-\nu)}$$ for $ \nu\leq r(\tau)$ and
$\nu\geq R(\tau).$ On the other hand if $r(\tau)\leq\nu\leq R(\tau)$, the inequality is reversed.

\end{corollary}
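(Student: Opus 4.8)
The plan is to reduce the whole statement to the monotonicity already established in Theorem \ref{first_monoton_theorem}. Setting $c=a/b$, a direct computation gives
$$\frac{a\nabla b-H_t(a,b)}{t(1-t)}=\frac{(a+b)-(a^tb^{1-t}+a^{1-t}b^t)}{2t(1-t)}=\frac{b}{2}\,f(t),$$
where $f(t)=\frac{1+c-(c^t+c^{1-t})}{t(1-t)}$ is exactly the function of Theorem \ref{first_monoton_theorem}. Hence, after cancelling the positive factor $b/2$, the claimed chain of inequalities is just a comparison between the values $f(\tau)$ and $f(\nu)$. Two features of $f$ will do all the work: first, $f$ is symmetric about $t=\tfrac12$, since $f(1-t)=f(t)$; second, by Theorem \ref{first_monoton_theorem}, $f$ is decreasing on $\left(0,\tfrac12\right)$ and increasing on $\left(\tfrac12,1\right)$. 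Since $\{r(\tau),R(\tau)\}=\{\tau,1-\tau\}$ with $r(\tau)\le\tfrac12\le R(\tau)$, the symmetry yields $f(r(\tau))=f(\tau)=f(R(\tau))$.

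With these facts in hand, the proof becomes a short case analysis. If $\nu\le r(\tau)$, then $\nu$ and $r(\tau)$ both lie in $\left(0,\tfrac12\right]$, where $f$ is decreasing, so $f(\nu)\ge f(r(\tau))=f(\tau)$; if $\nu\ge R(\tau)$, then $\nu$ and $R(\tau)$ both lie in $\left[\tfrac12,1\right)$, where $f$ is increasing, so $f(\nu)\ge f(R(\tau))=f(\tau)$. In either case $f(\tau)\le f(\nu)$, which is the asserted inequality. For the reverse, suppose $r(\tau)\le\nu\le R(\tau)$: if $\nu\le\tfrac12$ then $r(\tau)\le\nu\le\tfrac12$ and the monotone decrease on $\left(0,\tfrac12\right)$ gives $f(\nu)\le f(r(\tau))=f(\tau)$, while if $\nu\ge\tfrac12$ then $\tfrac12\le\nu\le R(\tau)$ and the monotone increase on $\left(\tfrac12,1\right)$ gives $f(\nu)\le f(R(\tau))=f(\tau)$. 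Either way $f(\tau)\ge f(\nu)$. Translating back through the displayed identity completes the argument, and the special case $\tau=\tfrac12$ recovers Corollary \ref{ref_rever_heinz_cor}, and hence \eqref{heron_heinz_bhatia}.

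Since Theorem \ref{first_monoton_theorem} is already available, I expect no serious obstacle to remain; the only delicate point is the boundary behaviour at $\nu\in\{0,1\}$, where the numerator and denominator of $\frac{a\nabla b-H_\nu(a,b)}{\nu(1-\nu)}$ both vanish. A Taylor expansion at $0$ shows $\lim_{t\to0^+}f(t)=(c-1)\log c\ge0$, so the quotient extends continuously to $[0,1]$ and the inequalities persist at the endpoints by continuity; alternatively one simply restricts to $\nu\in(0,1)$, as in the preceding corollaries. Thus the real content --- the V-shaped monotonicity of $f$, coming from the convexity of $g(t)=\frac{1\nabla_t c-1\#_t c}{t(1-t)}$ --- lives entirely in Theorem \ref{first_monoton_theorem}, and here it is only a matter of exploiting its symmetry.
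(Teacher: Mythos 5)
Your argument is correct and is exactly the route the paper intends: the corollary is stated as an immediate consequence of the symmetry $f(1-t)=f(t)$ combined with the monotonicity of $f$ from Theorem \ref{first_monoton_theorem}, which is precisely the reduction you carry out (the paper merely omits the case analysis that you write out). The identity $\frac{a\nabla b-H_t(a,b)}{t(1-t)}=\frac{b}{2}f(t)$ with $c=a/b$ and your endpoint remark are both fine, so nothing further is needed.
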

In fact the above inequalities have their own geometric meaning, as follows. Let $H(t)=H_t(a,b)=\frac{a^tb^{1-t}+a^{1-t}b^t}{2}$
 and fix any $\tau\in (0,1).$ Using the points $(0,H(0)), \left(\tau,H(\tau)\right)$ and $(1,H(1))$, we may find a quadratic polynomial that
 interpolates $f$ at $0,\tau,1.$ Consider the  function
\begin{equation}\label{definition_F}
F_{\tau}(t)=a\nabla b-\frac{a\nabla b-H_{\tau}(a,b)}{\tau(1-\tau)}t(1-t).
\end{equation}
 Notice that, when $\tau$ is fixed, $F_{\tau}$ is a quadratic polynomial which
coincides with $H$ at $t=0,\tau,1$. That is, $F_{\tau}$ is the quadratic interpolating polynomial of $H_t(a,b).$

From Corollary \ref{full_comp_heinz}, it follows that $H_t(a,b)\geq F_{\tau}(t)$ when $r(\tau)\leq t\leq R(t)$ and $H_t(a,b)\leq F_{\tau}(t)$ when $t\leq r(\tau)$
or $t\geq R(\tau).$ Notice that when $\tau=\frac{1}{2}$, we are left with $H_t(a,b)\leq F_{\frac{1}{2}}(t), 0\leq t\leq 1,$ which is the known comparison between
the Heinz and Heron means!

Our next target is to present a squared version of these refinements. This will help prove some Hilbert-Schmidt norm inequalities for matrices.
\begin{proposition}\label{squared_heinz_prop}
Let $a,b>0$ and $0<\nu,\tau<1.$ If $\nu\leq r(\tau)$ or $\nu\geq R(\tau)$, then
\begin{equation}\label{square_heinz_eq}
\frac{(a\nabla b)^2-H_{\tau}(a,b)^2}{\tau(1-\tau)}\leq \frac{(a\nabla b)^2-H_{\nu}(a,b)^2}{\nu(1-\nu)}.
\end{equation}
On the other hand, if $r(\tau)\leq\nu\leq R(\tau)$, the inequality is reversed.
\end{proposition}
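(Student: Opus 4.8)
The plan is to reduce Proposition~\ref{squared_heinz_prop} directly to Theorem~\ref{first_monoton_theorem} via an elementary algebraic identity, after which the comparison inequalities follow by the same symmetry-and-monotonicity argument used for Corollary~\ref{ref_rever_heinz_cor} and Corollary~\ref{full_comp_heinz}.

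First I would expand the squares. Since $H_t(a,b)^2=\frac14\bigl(a^{2t}b^{2-2t}+a^{2-2t}b^{2t}+2ab\bigr)$ and $(a\nabla b)^2=\frac14\bigl(a^2+2ab+b^2\bigr)$, the cross terms $2ab$ cancel and
$$(a\nabla b)^2-H_t(a,b)^2=\frac14\bigl(a^2+b^2-a^{2t}b^{2-2t}-a^{2-2t}b^{2t}\bigr).$$
Factoring out $b^2$ and setting $c=a^2/b^2>0$, the bracket becomes $b^2\bigl(1+c-(c^t+c^{1-t})\bigr)$, where one uses $a^{2t}b^{-2t}=c^{t}$ and $a^{2-2t}b^{2t-2}=c^{1-t}$. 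Dividing by $t(1-t)$ then gives
$$\frac{(a\nabla b)^2-H_t(a,b)^2}{t(1-t)}=\frac{b^2}{4}\cdot\frac{1+c-(c^t+c^{1-t})}{t(1-t)}=\frac{b^2}{4}\,f(t),$$
with $f$ exactly the function of Theorem~\ref{first_monoton_theorem} evaluated at the parameter $c=a^2/b^2$. Since $b^2/4>0$, Theorem~\ref{first_monoton_theorem} tells us this quantity is decreasing on $\bigl(0,\tfrac12\bigr)$, increasing on $\bigl(\tfrac12,1\bigr)$, and symmetric about $t=\tfrac12$ (the symmetry being inherited from the decomposition $f(t)=g(t)+g(1-t)$ used there).

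Writing $\phi(t)$ for the left-hand side above, symmetry gives $\phi(\tau)=\phi(r(\tau))=\phi(R(\tau))$, where $r(\tau)\le\tfrac12\le R(\tau)$. If $\nu\le r(\tau)$, both $\nu$ and $r(\tau)$ lie in the interval where $\phi$ decreases, so $\phi(\nu)\ge\phi(r(\tau))=\phi(\tau)$; if $\nu\ge R(\tau)$, both lie in the interval where $\phi$ increases, so $\phi(\nu)\ge\phi(R(\tau))=\phi(\tau)$; in either case this is \eqref{square_heinz_eq}. If instead $r(\tau)\le\nu\le R(\tau)$, one splits according to whether $\nu\le\tfrac12$ or $\nu\ge\tfrac12$ and applies monotonicity on the corresponding side to obtain $\phi(\nu)\le\phi(\tau)$, the reversed inequality.

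There is no serious obstacle here: the crux is simply the observation that squaring the Heinz mean replaces the ratio $a/b$ by its square inside the expression $1+c-(c^t+c^{1-t})$, so all the genuine analytic work has already been carried out in Theorem~\ref{first_monoton_theorem}. The only place to be careful is the exponent bookkeeping when pulling out $b^2$—verifying that $a^{2-2t}b^{2t-2}$ equals $(a^2/b^2)^{1-t}$ and not $(a^2/b^2)^{t-1}$—which is routine.
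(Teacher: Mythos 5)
Your proposal is correct and is essentially the paper's own argument: both rest on the observation that the $2ab$ cross terms cancel, so that $(a\nabla b)^2-H_t(a,b)^2$ equals $\tfrac12\bigl(a^2\nabla b^2-H_t(a^2,b^2)\bigr)$, reducing the squared statement to the unsquared comparison at the pair $(a^2,b^2)$. The only cosmetic difference is that the paper invokes Corollary~\ref{full_comp_heinz} for $a^2,b^2$, whereas you unwind that corollary and cite Theorem~\ref{first_monoton_theorem} directly with $c=a^2/b^2$.
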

\begin{proof}
Notice that, when $\nu\leq r(\tau)$ or $\nu\geq R(\tau)$,
\begin{align*}
4\left[(a\nabla b)^2-H_{\nu}(a,b)^2\right]&=(a+b)^2-(a^{\nu}b^{1-\nu}+a^{1-\nu}b^{\nu})^2\\
&= 2\left[a^2\nabla b^2-H_{\nu}(a^2,b^2)\right]\\
&\geq 2\frac{\nu(1-\nu)}{\tau(1-\tau)}\left[a^2\nabla b^2-H_{\tau}(a^2,b^2)\right]\\
&=\frac{\nu(1-\nu)}{\tau(1-\tau)}\left[(a+b)^2-(a^{\tau}b^{1-\tau}+a^{1-\tau}b^{\tau})^2\right].
\end{align*}
Then dividing by 4 implies the desired result when $\nu\leq r(\tau)$ or $\nu\geq R(\tau)$. The other case follows similarly.
\end{proof}

Notice that the above refinements and reverses of Heinz inequality have been found using the monotonicity of the function $f(t)=\frac{1+c-(c^{t}
+c^{1-t})}{t(1-t)}.$ Convexity of this function, which we have shown in Theorem \ref{first_monoton_theorem} implies the following reverse.
\begin{corollary}
Let $a,b>0$ and $0\leq t\leq 1.$ Then
\begin{equation}\label{log_reverse_heinz}
\left(a^{t}b^{1-t}+a^{1-t}b^{t}\right)+t(1-t)(b-a)\log\frac{b}{a}\geq a+b.
\end{equation}
\end{corollary}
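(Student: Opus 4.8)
The plan is to read \eqref{log_reverse_heinz} off directly from the convexity of $f$ established in Theorem~\ref{first_monoton_theorem}, which is exactly what the sentence preceding the statement advertises. The case $a=b$ is trivial (both sides equal $a+b$), and for $t\in\{0,1\}$ the inequality is an equality, so I would fix $a\neq b$ and $0<t<1$ and set $c=\tfrac{a}{b}$, as in the proofs of the earlier corollaries. Dividing \eqref{log_reverse_heinz} by $b>0$ and rearranging, the claim becomes
\[
f(t)=\frac{1+c-(c^{t}+c^{1-t})}{t(1-t)}\le (c-1)\log c ,
\]
so the whole task reduces to bounding $f(t)$ by the constant $(c-1)\log c$.

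The key observation is that $(c-1)\log c$ is precisely the boundary value of $f$. Extending $f$ continuously to $[0,1]$, I would compute $\lim_{t\to 0^{+}}f(t)$ by L'Hôpital's rule: the numerator $1+c-c^{t}-c^{1-t}$ vanishes at $t=0$ and has derivative $(\log c)(c^{1-t}-c^{t})$, equal to $(c-1)\log c$ there, while the denominator $t(1-t)$ has derivative $1$ at $t=0$; hence $f(0^{+})=(c-1)\log c$, and by the $t\leftrightarrow 1-t$ symmetry of $f$ also $f(1^{-})=(c-1)\log c$. A first-order Taylor expansion of $c^{t}$ and $c^{1-t}$ gives the same value.

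Finally, by Theorem~\ref{first_monoton_theorem} the function $f$ is convex on $(0,1)$, and being continuous at the endpoints it is convex on all of $[0,1]$; since $f(0)=f(1)=(c-1)\log c$, convexity yields
\[
f(t)=f\bigl((1-t)\cdot 0+t\cdot 1\bigr)\le (1-t)f(0)+t\,f(1)=(c-1)\log c
\]
for every $t\in[0,1]$. Multiplying through by $b\,t(1-t)\ge 0$ and undoing the normalization returns \eqref{log_reverse_heinz}. Equivalently, one could invoke the stronger conclusion of Theorem~\ref{first_monoton_theorem}, namely that $f$ decreases on $(0,\tfrac12)$ and increases on $(\tfrac12,1)$, so that its maximum on $[0,1]$ is attained at the endpoints, giving the same bound. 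The only step carrying any content beyond bookkeeping is the boundary-value computation of the second paragraph; the inequality itself is supplied entirely by the convexity already proved, so I expect no genuine obstacle.
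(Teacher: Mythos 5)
Your proposal is correct and follows essentially the same route as the paper: the paper's proof also sets $c=\tfrac{a}{b}$, invokes the convexity of $f$ established in Theorem~\ref{first_monoton_theorem}, and writes $f(t)\le (1-t)f(0)+tf(1)$, leaving the identification $f(0)=f(1)=(c-1)\log c$ to the reader. Your explicit L'H\^opital computation of these boundary limits merely fills in the step the paper labels ``simplifying,'' so there is nothing further to add.
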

\begin{proof}
For $c=\frac{a}{b},$ the function $f(t)=\frac{1+c-(c^{t}+c^{1-t})}{t(1-t)}$ is convex. Therefore, $f(t)\leq (1-t)f(0)+t f(1).$ Simplifying
this  inequality implies the result.
\end{proof}

In \cite{kitt_man}, a reversed version of Heinz inequality was proved as follows
\begin{equation}\label{kitt_man_reverse}
\left(a^{t}b^{1-t}+a^{1-t}b^{t}\right)^2+2\max\{t,1-t\}(a-b)^2\geq (a+b)^2.
\end{equation}

Numerical experiments show that neither \eqref{log_reverse_heinz} nor \eqref{kitt_man_reverse} is uniformly better than the other. However, these
experiments show that, for most values of $t$, \eqref{log_reverse_heinz} is better than \eqref{kitt_man_reverse} when $\frac{a}{b}$ is relatively small
and \eqref{kitt_man_reverse} is better when $\frac{a}{b}$ is large. In fact, a squared logarithmic-refinement maybe obtained as follows.
\begin{proposition}\label{squared_logarithmic}
 Let $a,b>0$ and $0\leq t\leq 1.$ Then
$$(a^{t}b^{1-t}+a^{1-t}b^{t})^2+2t(1-t)(b^2-a^2)\log\frac{b}{a}\leq (a+b)^2.$$
\end{proposition}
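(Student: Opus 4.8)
The plan is to reduce the squared estimate to the already-established logarithmic bound \eqref{log_reverse_heinz} through the substitution $a\mapsto a^2$, $b\mapsto b^2$, exactly as the squared comparison in Proposition \ref{squared_heinz_prop} was obtained from its unsquared version. Two elementary facts drive the computation. First, expanding the square gives
\[
\left(a^{t}b^{1-t}+a^{1-t}b^{t}\right)^2=\left(a^{2t}b^{2(1-t)}+a^{2(1-t)}b^{2t}\right)+2ab,
\]
so that the difference $\left(a^{t}b^{1-t}+a^{1-t}b^{t}\right)^2-(a+b)^2$ coincides with $\left(a^{2t}b^{2(1-t)}+a^{2(1-t)}b^{2t}\right)-(a^2+b^2)$, i.e.\ with the corresponding difference for an \emph{ordinary} Heinz sum in the variables $a^2,b^2$. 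Second, $\log\frac{b^2}{a^2}=2\log\frac{b}{a}$, which is precisely what produces the factor $2$ in the correction term $2t(1-t)(b^2-a^2)\log\frac{b}{a}$.

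The key steps, in order, would be: (i) apply \eqref{log_reverse_heinz} with $a^2,b^2$ in place of $a,b$, giving
\[
\left(a^{2t}b^{2(1-t)}+a^{2(1-t)}b^{2t}\right)+t(1-t)(b^2-a^2)\log\frac{b^2}{a^2}\geq a^2+b^2;
\]
(ii) replace $\log\frac{b^2}{a^2}$ by $2\log\frac{b}{a}$; and (iii) use the square-expansion identity to write $a^{2t}b^{2(1-t)}+a^{2(1-t)}b^{2t}=\left(a^{t}b^{1-t}+a^{1-t}b^{t}\right)^2-2ab$, then move the $-2ab$ to the right-hand side, where $a^2+b^2+2ab=(a+b)^2$. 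This produces the squared comparison between $\left(a^{t}b^{1-t}+a^{1-t}b^{t}\right)^2+2t(1-t)(b^2-a^2)\log\frac{b}{a}$ and $(a+b)^2$, a squared logarithmic analogue of \eqref{log_reverse_heinz} running parallel to \eqref{kitt_man_reverse}.

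The step that most deserves care, and the main obstacle, is fixing the \emph{orientation} of the final inequality. Since \eqref{log_reverse_heinz} is a reverse (lower) bound, feeding $a^2,b^2$ into it yields a lower bound for the Heinz sum in $a^2,b^2$, hence (after step (iii)) a lower bound for the squared Heinz sum; the substitution therefore naturally delivers ``$\geq$''. Before committing to the printed direction I would test the sign at $a=1$, $b=e$, $t=\tfrac12$: there the correction term equals $\tfrac12(e^2-1)$, while $(a+b)^2-\left(a^{t}b^{1-t}+a^{1-t}b^{t}\right)^2=(1+e)^2-4e=(e-1)^2<\tfrac12(e^2-1)$, confirming that the correct inequality sign is ``$\geq$''. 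Once the orientation is pinned down in this way, the remaining work in steps (i)--(iii) is the routine algebra recorded above.
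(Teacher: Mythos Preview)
Your proposal is correct and follows exactly the paper's approach: substitute $a\mapsto a^2$, $b\mapsto b^2$ into \eqref{log_reverse_heinz} and use the identity $(a+b)^2-(a^tb^{1-t}+a^{1-t}b^t)^2=(a^2+b^2)-\bigl((a^2)^t(b^2)^{1-t}+(a^2)^{1-t}(b^2)^t\bigr)$. Your careful check of the orientation is well-founded: the paper's own proof in fact establishes the direction ``$\geq$'', so the ``$\leq$'' in the displayed statement is a misprint.
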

\begin{proof}
 Notice that, utilizing \eqref{log_reverse_heinz},
\begin{align*}
 (a+b)^2-(a^{t}b^{1-t}+a^{1-t}b^{t})^2&=(a^2+b^2)-\left((a^2)^t(b^2)^{1-t}+(a^2)^{1-t}(b^2)^t\right)\\
&\leq t(1-t)(b^2-a^2)\log\frac{b^2}{a^2}.
\end{align*}
This completes  the proof.
\end{proof}

The above refinement are all additive versions, where the refining term is added to one side of the inequality. Multiplicative versions can be found as follows.
\begin{lemma}
 For $c>0$, let $$f(t)=\left(\frac{1+c}{c^t+c^{1-t}}\right)^{\frac{1}{t(1-t)}}.$$ Then $f$ is increasing on $\left(0,\frac{1}{2}\right)$ and is decreasing on $\left(\frac{1}{2},1\right).$
\end{lemma}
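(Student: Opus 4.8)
The plan is to reduce the claimed monotonicity of the multiplicative quantity to the additive monotonicity already established in Theorem \ref{first_monoton_theorem}. Write $f(t)=\exp\!\left(\frac{1}{t(1-t)}\log\frac{1+c}{c^t+c^{1-t}}\right)$, so that $\log f(t)=-\,\phi(t)$, where
\[
\phi(t)=\frac{\log(c^t+c^{1-t})-\log(1+c)}{t(1-t)}.
\]
Since $\exp$ is increasing, it suffices to show that $\phi$ is \emph{decreasing} on $\left(0,\frac12\right)$ and \emph{increasing} on $\left(\frac12,1\right)$; equivalently, $-\phi=\log f$ has the opposite monotonicity, which is exactly what the lemma asserts. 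Note $\phi$ is symmetric about $t=\tfrac12$ and, by the same removable-singularity argument as before, extends continuously to $t=0,1$ with $\phi(0)=\phi(1)$ (using $\log(c^t+c^{1-t})\sim\log(1+c)$ to first order, the quotient has a finite limit).

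The main step is to prove that $\phi$ is convex on $(0,1)$; once that is in hand, the monotonicity follows verbatim as in the final paragraph of the proof of Theorem \ref{first_monoton_theorem}: a convex function that is symmetric about $\tfrac12$ (hence has $\phi'(\tfrac12)=0$) must be decreasing on $\left(0,\tfrac12\right)$ and increasing on $\left(\tfrac12,1\right)$, the minimum being attained at $\tfrac12$ via the Taylor expansion $\phi(t)=\phi(\tfrac12)+\tfrac{\phi''(\xi_t)}{2}(t-\tfrac12)^2\ge\phi(\tfrac12)$. To get convexity, I would mimic the device $f(t)=g(t)+g(1-t)$ used earlier: write $\phi(t)=\psi(t)+\psi(1-t)$ with
\[
\psi(t)=\frac{\log(c^t+c^{1-t})-\log(1+c)}{t(1-t)}\cdot\frac12 \quad\text{— but this split is not available}
\]
since the numerator does not decompose additively the way $1\nabla_t c - 1\#_t c$ did. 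So instead I would work directly: set $u(t)=\log(c^t+c^{1-t})-\log(1+c)$, which satisfies $u(0)=u(1)=0$ and, crucially, is itself convex in $t$ (a standard fact: $t\mapsto\log(c^t+c^{1-t})$ is convex, being the composition logic of a log-sum-exp in the variables $t\log c$ and $(1-t)\log c$), so $u(t)\le 0$ on $[0,1]$. Then $\phi(t)=u(t)/(t(1-t))$, and I would compute $\phi''$ and show the sign of its numerator is controlled. Concretely, $\phi'' = \frac{u''}{t(1-t)} + 2u'\cdot\frac{d}{dt}\frac{1}{t(1-t)} + u\cdot\frac{d^2}{dt^2}\frac{1}{t(1-t)}$; using $u\le 0$, $u''\ge 0$, and the convexity/positivity properties of $1/(t(1-t))$ and its derivatives on $(0,1)$, together with the endpoint conditions $u(0)=u(1)=0$, one pins down $\phi''\ge 0$. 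This last sign analysis is the delicate part and will likely require writing $u$ in terms of $v=t\log c$ and exploiting $u(t)=\log\cosh\!\big((t-\tfrac12)\log c\big)+\text{const}$, i.e.\ $u(t)=\log\cosh\!\big((t-\tfrac12)\log c\big)-\log\cosh\!\big(\tfrac12\log c\big)$, which is manifestly even about $\tfrac12$, nonpositive, and convex with all the derivative information explicit in terms of $\tanh$ and $\operatorname{sech}^2$.

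The hard part will be the sign of $\phi''$ near the endpoints $t\to 0^+,1^-$, where $1/(t(1-t))$ and its derivatives blow up while $u$ and $u'$ vanish; the product limits must be balanced carefully, and the interior sign must be checked uniformly in $c>0$. I expect the cleanest route is the substitution $s=t-\tfrac12\in\left(-\tfrac12,\tfrac12\right)$ and $\lambda=\log c$, turning $\phi$ into
\[
\phi = \frac{\log\cosh(\lambda s)-\log\cosh\!\big(\tfrac{\lambda}{2}\big)}{\tfrac14 - s^2},
\]
an even function of $s$, whose convexity in $s$ (equivalently in $t$) reduces to a one-variable inequality after clearing denominators — a computation analogous in spirit, though messier, to the $h(c),k(t)$ chain in Theorem \ref{first_monoton_theorem}. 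Alternatively, one can bypass $\phi''$ entirely: show $\phi$ decreasing on $\left(0,\tfrac12\right)$ by proving $\phi'\le 0$ there, i.e.\ $t(1-t)u'(t)\le(1-2t)u(t)$, which, after dividing by $t(1-t)>0$ and $(1-2t)>0$, and substituting the $\cosh$ form, becomes a monotonicity comparison between $\tfrac{\tanh(\lambda s)}{\lambda s}$-type expressions; I would carry that out via the series expansions of $\tanh$ and $\log\cosh$ if the direct manipulation proves unwieldy.
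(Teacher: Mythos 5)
Your reduction to the function $\phi(t)=\frac{\log(c^t+c^{1-t})-\log(1+c)}{t(1-t)}=-\log f(t)$, the symmetry observation, and the reformulation $u(t)=\log\cosh\bigl((t-\tfrac12)\log c\bigr)-\log\cosh\bigl(\tfrac12\log c\bigr)$ are all fine, but the step you designate as the main one --- convexity of $\phi$ on $(0,1)$ --- is false in general, so the ``follows verbatim as in Theorem \ref{first_monoton_theorem}'' conclusion is not available. Take $c=e^{\lambda}$ with $\lambda$ large. On a compact subinterval of $\bigl(0,\tfrac12\bigr)$ one has $\log\cosh\bigl(\lambda(t-\tfrac12)\bigr)=\lambda\bigl(\tfrac12-t\bigr)-\log 2+O\bigl(e^{-2\lambda(\frac12-t)}\bigr)$ and $\log\cosh\bigl(\tfrac{\lambda}{2}\bigr)=\tfrac{\lambda}{2}-\log 2+O(e^{-\lambda})$, hence $u(t)=-\lambda t+o(1)$ and $\phi(t)=\frac{-\lambda}{1-t}+o(1)$, whose profile is strictly \emph{concave} in $t$. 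Numerically, for $\lambda=10$ (i.e.\ $c=e^{10}$) one finds $\phi(0.1)\approx-11.11$, $\phi(0.2)\approx-12.49$, $\phi(0.3)\approx-14.20$, so $\phi(0.1)-2\phi(0.2)+\phi(0.3)\approx-0.34<0$, contradicting convexity. The facts you list ($u\le 0$, $u''\ge 0$, $\frac{1}{t(1-t)}$ positive and convex) simply do not combine to give convexity of the product, and the convex function established in Theorem \ref{first_monoton_theorem} is the additive difference quotient $\frac{1+c-(c^t+c^{1-t})}{t(1-t)}$, not its logarithmic analogue; the two behave differently.

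What survives of your proposal is the closing alternative: prove $\phi'\le 0$ on $\bigl(0,\tfrac12\bigr)$ directly, i.e.\ $t(1-t)u'(t)\le(1-2t)u(t)$. But that inequality is exactly where all the work lies, and it is precisely what the paper proves: writing $F(t)=\log f(t)$, the paper computes $F'(t)=\frac{G(t)}{(c+c^{2t})t^2(1-t)^2}$, factors $G(t)=(c-c^{2t})g(t)$, and pins down the sign through the nested chain $g,h,k$ (differentiating down to $k'(t)=2(c-c^{2t})\log c$) with a case analysis on $c>1$ versus $0<c<1$. You leave this step entirely unexecuted (``I would carry that out via series expansions\dots if the direct manipulation proves unwieldy''), and the suggested series route is itself shaky: the Maclaurin series of $\tanh$ and $\log\cosh$ converge only for $\bigl|(t-\tfrac12)\log c\bigr|<\tfrac{\pi}{2}$, so they cannot handle all $c>0$ uniformly. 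So as written the proposal does not contain a proof: the lemma is true, but it must be obtained by a direct sign analysis of $F'$ (or $\phi'$) of the kind carried out in the paper, not from convexity of $-\log f$.
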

\begin{proof}
 We prove that $f$ is increasing on $\left(0,\frac{1}{2}\right)$, then the conclusion for the other interval follows by symmetry of $f$. Thus, for $0<t<\frac{1}{2},$ let $F(t)=\log f(t).$ That is,
$$F(t)=\frac{\log(c+1)-\log(c^t+c^{1-t})}{t(1-t)}.$$ Then
$$F'(t)=\frac{G(t)}{(c+c^{2t})t^2(1-t)^2},$$ where
\begin{eqnarray*}
G(t)&=&(c-c^{2t})t(1-t)\log c+(c+c^{2t})(2t-1)\left(\log(c+1)-\log(c^{t}+c^{1-t})\right)\\
&=&(c-c^{2t})g(t),
\end{eqnarray*}
for $$g(t)=t(1-t)\log c+\frac{(c+c^{2t})(2t-1)\left(\log(c+1)-\log(c^{t}+c^{1-t})\right)}{c-c^{2t}}.$$ Further, we have
$$g'(t)=2\frac{\log(c+1)-\log(c^t+c^{1-t})}{(c-c^{2t})^2}h(t),$$ where
$$h(t)=c^2-c^{4t}+2c^{1+2t}(2t-1)\log c.$$ Finally we have
$$h'(t)=4c^{2t}k(t)\log c \;{\text{where}}\;k(t)=c-c^{2t}+(2c\;t-c)\log c$$ and $$k'(t)=2(c-c^{2t})\log c.$$

Now we treat two cases, based on whether $c>1$ or $c<1.$\\

If $c>1$, then clearly $k'(t)>0$ because $0<t<\frac{1}{2}$, and $k$ is increasing. Hence, $k(t)\leq k\left(\frac{1}{2}\right)=0$ and $h$ is decreasing. Therefore, $h(t)\geq h\left(\frac{1}{2}\right)=0$ and $g$ is increasing. Since $g$ is increasing, we have $g(t)\geq g(0)=0$, and hence $G\geq 0.$ This shows that $F'(t)\geq 0$ when $c>1$ and $0<t\leq\frac{1}{2}.$

If $0<c<1,$ then $k'(t)>0$ and $k\leq 0$. Hence $h'>0$ and $h(t)\leq h\left(\frac{1}{2}\right)=0$. That is $g'<0$ and $g(t)\leq g(0)=0$. Since $g(t)\leq 0, 0<t<\frac{1}{2}$ and $0<c<1$, it follows that $G(t)\geq 0$ and $F'(t)\geq 0.$

Thus, we have shown that for $0<t<\frac{1}{2}$ and $c>0$, we have $F'(t)\geq 0.$ This completes the proof.
\end{proof}

In particular, $f(t)=\left(\frac{1+c}{c^t+c^{1-t}}\right)^{\frac{1}{t(1-t)}}$ attains its maximum at $t_0=\frac{1}{2}.$ This entails the following reversed version of the Heinz inequality.
\begin{corollary}
Let $a,b>0$ and let $0\leq t\leq 1.$ Then
$$a+b\leq \left(\frac{a\nabla b}{a\#b}\right)^{4t(1-t)}\left(a^{t}b^{1-t}+a^{1-t}b^{t}\right).$$
\end{corollary}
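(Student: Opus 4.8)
The plan is to derive this reversed Heinz inequality as a direct consequence of the Lemma immediately preceding it, in exactly the manner of the additive corollaries proved earlier in the paper. First I would set $c = \frac{a}{b}$ and recall from the Lemma that the function $f(t) = \left(\frac{1+c}{c^t + c^{1-t}}\right)^{\frac{1}{t(1-t)}}$ is increasing on $\left(0,\frac{1}{2}\right)$ and decreasing on $\left(\frac{1}{2},1\right)$, hence attains its maximum at $t_0 = \frac{1}{2}$. This gives the pointwise bound $f(t) \le f\left(\frac{1}{2}\right)$ for every $t \in (0,1)$, and by continuity also at the endpoints $t = 0, 1$ (where both sides are $1$).

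The next step is to compute $f\left(\frac{1}{2}\right)$ explicitly: at $t = \frac{1}{2}$ we have $t(1-t) = \frac{1}{4}$ and $c^t + c^{1-t} = 2c^{1/2} = 2\sqrt{c}$, so $f\left(\frac{1}{2}\right) = \left(\frac{1+c}{2\sqrt{c}}\right)^4$. Substituting back, the inequality $f(t) \le f\left(\frac{1}{2}\right)$ reads $\left(\frac{1+c}{c^t+c^{1-t}}\right)^{\frac{1}{t(1-t)}} \le \left(\frac{1+c}{2\sqrt{c}}\right)^4$. Since $t(1-t) > 0$, raising both sides to the power $t(1-t)$ preserves the inequality, yielding $\frac{1+c}{c^t+c^{1-t}} \le \left(\frac{1+c}{2\sqrt{c}}\right)^{4t(1-t)}$, and therefore $1 + c \le \left(\frac{1+c}{2\sqrt{c}}\right)^{4t(1-t)} \left(c^t + c^{1-t}\right)$ since $c^t + c^{1-t} > 0$.

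Finally I would homogenize back to $a$ and $b$. Multiplying both sides of the last inequality by $b > 0$ and using $c = \frac{a}{b}$ gives $a + b = b(1+c) \le b\left(\frac{1+c}{2\sqrt{c}}\right)^{4t(1-t)}(c^t + c^{1-t}) = \left(\frac{b + a}{2\sqrt{ab}}\right)^{4t(1-t)}\left(a^t b^{1-t} + a^{1-t} b^t\right)$, where I have used $b\, c^t = a^t b^{1-t}$ and $b\sqrt{c} = \sqrt{ab}$, together with the fact that the factor $\left(\frac{1+c}{2\sqrt{c}}\right)^{4t(1-t)}$ is unchanged under scaling since it already involves only the ratio. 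Recognizing $\frac{a+b}{2\sqrt{ab}} = \frac{a\nabla b}{a\#b}$ completes the proof.

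I do not expect any genuine obstacle here: the entire argument is a substitution-and-simplification exercise once the Lemma is granted, mirroring the proofs of \eqref{first_heinz} and \eqref{log_reverse_heinz}. The only point requiring a modicum of care is the bookkeeping in the homogenization step — making sure that the exponent $4t(1-t)$ on the Heron-type factor comes out correctly and that the factor $\left(\frac{1+c}{2\sqrt c}\right)^{4t(1-t)}$ is genuinely scale-invariant — but this is routine.
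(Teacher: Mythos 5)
Your proposal is correct and follows exactly the route the paper intends: the Lemma gives $f(t)\leq f\left(\tfrac{1}{2}\right)=\left(\frac{1+c}{2\sqrt{c}}\right)^{4}$ for $c=\frac{a}{b}$, and raising to the power $t(1-t)$ and homogenizing yields the stated inequality. The only blemish is the parenthetical remark that ``both sides are $1$'' at $t=0,1$ (the limit of $f$ there is not $1$ in general), but this is immaterial since the corollary reduces to the equality $a+b\leq a+b$ at the endpoints.
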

A full Comparison can be given as follows.
\begin{corollary}\label{full_multi_arith_geo_cor}
Let $a,b>0$ and let $0<\nu,\tau<1.$ If $\nu\leq r(\tau)$ or $\nu\geq R(\tau)$, then
\begin{equation}\label{full_multi_arith_geo}
\left(\frac{a\nabla b}{H_{\nu}(a,b)}\right)^{\frac{1}{\nu(1-\nu)}}\leq \left(\frac{a\nabla b}{H_{\tau}(a,b)}\right)^{\frac{1}{\tau(1-\tau)}}.
\end{equation}
 On the other hand, if $r(\tau)\leq \nu\leq R(\tau),$ the inequality is reversed.
\end{corollary}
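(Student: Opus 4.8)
The plan is to recognize the two quantities appearing in \eqref{full_multi_arith_geo} as values of the function $f$ from the preceding lemma, and then to translate the monotonicity of $f$ on $\left(0,\frac{1}{2}\right)$ and $\left(\frac{1}{2},1\right)$, together with its symmetry about $t=\frac{1}{2}$, into the claimed symmetric comparison in terms of $r(\tau)$ and $R(\tau)$.

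First I would put $c=\frac{a}{b}$ and observe that, dividing numerator and denominator by $b$ and using $a^{t}b^{1-t}/b=c^{t}$ and $a^{1-t}b^{t}/b=c^{1-t}$,
$$\frac{a\nabla b}{H_{t}(a,b)}=\frac{a+b}{a^{t}b^{1-t}+a^{1-t}b^{t}}=\frac{1+c}{c^{t}+c^{1-t}},$$
so that
$$\left(\frac{a\nabla b}{H_{t}(a,b)}\right)^{\frac{1}{t(1-t)}}=f(t),\qquad 0<t<1,$$
with $f$ exactly the function of the lemma. Thus inequality \eqref{full_multi_arith_geo} is literally the assertion that $f(\nu)\leq f(\tau)$.

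Next I would record the lemma in symmetric form. Since $f$ is increasing on $\left(0,\frac{1}{2}\right]$ and $f(t)=f(1-t)$ for all $t\in(0,1)$, we may write $f(t)=\psi\bigl(r(t)\bigr)$, where $\psi:=f|_{\left(0,\frac{1}{2}\right]}$ is increasing and $r(t)=\min\{t,1-t\}$; indeed, for $t>\frac{1}{2}$ one has $f(t)=f(1-t)=\psi(1-t)=\psi\bigl(r(t)\bigr)$. Because $\psi$ is increasing, $r(\nu)\leq r(\tau)$ forces $f(\nu)\leq f(\tau)$, while $r(\nu)\geq r(\tau)$ forces $f(\nu)\geq f(\tau)$. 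Finally, $r(\nu)\leq r(\tau)$ is equivalent to $\nu\leq r(\tau)$ or $1-\nu\leq r(\tau)$, that is, to $\nu\leq r(\tau)$ or $\nu\geq 1-r(\tau)=R(\tau)$; and the complementary condition $r(\nu)\geq r(\tau)$ is exactly $r(\tau)\leq\nu\leq R(\tau)$.

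Combining the two steps completes the proof: if $\nu\leq r(\tau)$ or $\nu\geq R(\tau)$ then $f(\nu)\leq f(\tau)$, which is \eqref{full_multi_arith_geo}, and if $r(\tau)\leq\nu\leq R(\tau)$ the inequality reverses. There is no real analytic obstacle here, since all the work — the monotonicity of $f$ on the two half-intervals — was already done in the lemma; the only point demanding a little care is the reduction $f(t)=\psi\bigl(r(t)\bigr)$, which is what converts the asymmetric ``increasing on $\left(0,\frac{1}{2}\right)$, decreasing on $\left(\frac{1}{2},1\right)$'' statement into the symmetric comparison governed by $r(\tau)$ and $R(\tau)$.
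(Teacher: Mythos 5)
Your proof is correct and follows essentially the same route as the paper, which leaves this corollary's proof implicit: with $c=\frac{a}{b}$ the two sides are exactly $f(\nu)$ and $f(\tau)$ for the function $f$ of the preceding lemma, and the claimed comparison is just the lemma's monotonicity on $\left(0,\frac{1}{2}\right)$ and $\left(\frac{1}{2},1\right)$ combined with the symmetry $f(t)=f(1-t)$, which is how the paper handles the analogous additive statement in Corollary \ref{full_comp_heinz}. Your explicit reformulation $f(t)=\psi\bigl(r(t)\bigr)$ with $\psi$ increasing is a clean way of recording the bookkeeping the paper leaves to the reader.
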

Notice that \eqref{full_multi_arith_geo} maybe though of as a refinement and a reverse of the Heinz inequality $H_{\nu}(a,b)\leq a\nabla b,$ if written as
$$H_{\tau}(a,b)\left(\frac{a\nabla b}{H_{\nu}(a,b)}\right)^{\frac{\tau(1-\tau)}{\nu(1-\nu)}}\leq a\nabla b\leq \left(\frac{a\nabla b}{H_{\tau}(a,b)}\right)^{\frac{\nu(1-\nu)}{\tau(1-\tau)}}H_{\nu}(a,b).$$

In fact, Corollary \ref{full_multi_arith_geo} does not provide a refinement and a reverse of the Heinz inequality $H_{\nu}(a,b)\leq a\nabla b,$ but it also provides a refinement of the first inequality of \eqref{original_heinz_intro}, as follows. Letting $\tau=\frac{1}{2}$ in \eqref{full_multi_arith_geo}, we have
\begin{align*}
2\sqrt{ab}\leq (a+b)\left(\frac{a^{\nu}b^{1-\nu}+a^{1-\nu}b^{\nu}}{a+b}\right)^{\frac{1}{4\nu(1-\nu)}}, 0<\nu<1.
\end{align*}
Now noting that $\frac{a^{\nu}b^{1-\nu}+a^{1-\nu}b^{\nu}}{a+b}\leq 1$ and $\frac{1}{4\nu(1-\nu)}\geq 1,$ we have
\begin{align*}
2\sqrt{ab}&\leq(a+b)\left(\frac{a^{\nu}b^{1-\nu}+a^{1-\nu}b^{\nu}}{a+b}\right)^{\frac{1}{4\nu(1-\nu)}}\\
&\leq(a+b)\frac{a^{\nu}b^{1-\nu}+a^{1-\nu}b^{\nu}}{a+b}=a^{\nu}b^{1-\nu}+a^{1-\nu}b^{\nu},\;0<\nu<1.
\end{align*}

\subsubsection{The best quadratic interpolator of the Heinz means}

We have observed in the previous subsection that the Heinz inequality can be refined or reversed by looking at the quadratic polynomial interpolating $H_t$ at $t=0,\tau,1$ for any choice of $0<\tau<1.$ Moreover, we have seen that the celebrated result of Bhatia \cite{bhatiaint} about the comparison between the Heinz and Heron means happens to be a special case of this general interpolation idea, taking $\tau=\frac{1}{2}.$

In this part of the paper, we try to describe the ``best" quadratic polynomial $F_{\tau}$ that interpolates $H_t$. Thus, we are searching for $\tau$ that minimizes the error $\|H_t-F_{\tau}\|$, for some norm. We present this best interpolator using the norm $\|\cdot\|_1.$ In particular, we show that $\|H_t-F_{\tau}\|_1$ will have its minimum value when $\tau=\tau^*$, where $\tau^*$ is the unique root of $8\tau^3-12\tau^2+1=0$ between $0$ and $\frac{1}{2}.$ Thus, $\tau^*\approx 0.326352.$ Simple calculations show that this cubic polynomial has $3$ real roots, among which $\tau^*$ is the only root in $\left(0,\frac{1}{2}\right).$\\
It is interesting that this value $\tau^*$ is independent of $a$ and $b$.\\
In the following result, $F_{\tau}$ is the quadratic polynomial interpolating $H_t(a,b)$ at $0,\tau,1$, as in \eqref{definition_F}.

\begin{theorem}
Let $a,b>0$ and let $H(t):=H_t(a,b)$ represent the Heinz means of $a,b$. If $F_{\tau}$ is the quadratic interpolator of $H_t$, then
$$\min_{\tau}\|H-F_{\tau}\|_1:=\min_{\tau}\int_{0}^{1}|H(t)-F_{\tau}(t)|\;dt$$ is attained at $\tau^*$, the unique root of $8\tau^3-12\tau^2+1=0$ in $\left(0,\frac{1}{2}\right).$ Moreover, since $H_t$ and $F_{\tau}$ are symmetric about $t=\frac{1}{2},$ this minimum is also attained at $1-t^*.$
\end{theorem}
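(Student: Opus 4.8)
The plan is to reduce the minimization over $\tau$ to an explicit one-variable calculus problem by first understanding the sign of $H(t)-F_\tau(t)$. By Corollary \ref{full_comp_heinz} (equivalently, the discussion following it), for fixed $\tau\in(0,\tfrac12)$ we know that $H(t)-F_\tau(t)\le 0$ on $[0,r(\tau)]\cup[R(\tau),1]=[0,\tau]\cup[1-\tau,1]$ and $H(t)-F_\tau(t)\ge 0$ on $[\tau,1-\tau]$. This lets me drop the absolute value: I would write
\begin{align*}
\|H-F_\tau\|_1 &= \int_0^\tau \bigl(F_\tau(t)-H(t)\bigr)\,dt + \int_\tau^{1-\tau}\bigl(H(t)-F_\tau(t)\bigr)\,dt + \int_{1-\tau}^{1}\bigl(F_\tau(t)-H(t)\bigr)\,dt.
\end{align*}
Using the symmetry of both $H$ and $F_\tau$ about $t=\tfrac12$, the first and third integrals are equal, so the whole quantity equals $2\int_0^\tau(F_\tau-H) + \int_\tau^{1-\tau}(H-F_\tau)$, which I will rewrite as $-\int_0^1(H-F_\tau) + 2\int_0^\tau(F_\tau-H)\cdot\text{(sign bookkeeping)}$; more cleanly, $\|H-F_\tau\|_1 = \Phi(\tau)$ where $\Phi$ is an explicit function obtained by integrating the elementary antiderivatives of $H(t)=\tfrac12(a^tb^{1-t}+a^{1-t}b^t)$ and of the quadratic $F_\tau(t)=a\nabla b - \dfrac{a\nabla b-H_\tau(a,b)}{\tau(1-\tau)}t(1-t)$.

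Next I would carry out these integrations. The antiderivative of $a^tb^{1-t}$ over $t$ is $a^tb^{1-t}/\log(a/b)$ (for $a\ne b$; the case $a=b$ is trivial since then $H\equiv F_\tau$ and the norm is $0$), and $\int_0^1 t(1-t)\,dt=\tfrac16$, $\int_0^\tau t(1-t)\,dt=\tfrac{\tau^2}{2}-\tfrac{\tau^3}{3}$. Plugging in and collecting terms, $\Phi(\tau)$ becomes a combination of the constant $\int_0^1 H$, the quantity $H_\tau(a,b)=H(\tau)$, and rational functions of $\tau$ times these. The key simplification I expect is that after differentiating $\Phi$ with respect to $\tau$, the transcendental pieces (those involving $\log(a/b)$ and $a^\tau b^{1-\tau}$) cancel: $\tfrac{d}{d\tau}F_\tau(t)$ has a term from differentiating the coefficient $\frac{a\nabla b - H(\tau)}{\tau(1-\tau)}$, and the appearance of $H'(\tau)$ there should be exactly killed by boundary contributions from the Leibniz rule applied to the variable endpoints $\tau$ and $1-\tau$ (where the integrand vanishes, $H(\tau)-F_\tau(\tau)=0$, so those boundary terms vanish — this is the crucial point making $\Phi$ differentiable and simplifying $\Phi'$). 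What survives is $\Phi'(\tau) = \bigl(a\nabla b - H(\tau)\bigr)\cdot\dfrac{p(\tau)}{q(\tau)}$ for polynomials $p,q$, and since $a\nabla b - H(\tau)>0$ strictly for $a\ne b$, the sign of $\Phi'(\tau)$ is governed by $p(\tau)$. Computing $\frac{d}{d\tau}\left[\frac{1}{\tau(1-\tau)}\left(\frac{\tau^2}{2}-\frac{\tau^3}{3}\right)\cdot(\text{stuff})\right]$ and simplifying, I expect $p(\tau)$ to be proportional to $8\tau^3-12\tau^2+1$.

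Then I would finish by analyzing $8\tau^3-12\tau^2+1$ on $(0,\tfrac12)$: evaluate at the endpoints ($P(0)=1>0$, $P(\tfrac12)=1-3+1=-1<0$) to get a root $\tau^*\in(0,\tfrac12)$ by the intermediate value theorem, and check $P'(\tau)=24\tau^2-24\tau=24\tau(\tau-1)<0$ on $(0,1)$ so $P$ is strictly decreasing there, giving uniqueness of $\tau^*$ in $(0,\tfrac12)$ (and indeed in $(0,1)$); the other two real roots lie outside $[0,1]$. Consequently $\Phi'(\tau)>0$ for $\tau<\tau^*$ and $\Phi'(\tau)<0$ for $\tau^*<\tau<\tfrac12$, so $\Phi$ has a strict interior maximum... wait — I must double-check the sign convention so that it is a \emph{minimum} of $\|H-F_\tau\|_1$; if the computation yields a maximum I will have mislabeled a sign in reducing the absolute value, and the correct statement is that $\tau^*$ minimizes, so I will track signs carefully and, if needed, note that $\Phi$ is the relevant error and $\tau^*$ is its unique critical point in $(0,\tfrac12)$, hence the extremizer. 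Finally, the symmetry $F_\tau = F_{1-\tau}$ and $H(t)=H(1-t)$ gives $\|H-F_\tau\|_1=\|H-F_{1-\tau}\|_1$, so $1-\tau^*$ is also a minimizer, completing the proof. The main obstacle I anticipate is the bookkeeping in the differentiation step — making sure the Leibniz-rule endpoint terms genuinely vanish and that the transcendental terms cancel cleanly so that $\Phi'$ reduces to a rational multiple of the cubic; this is where a sign slip or an overlooked term would derail the argument.
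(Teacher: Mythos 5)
Your overall skeleton matches the paper's: reduce by symmetry to $\tau\in(0,\tfrac12)$, use the sign pattern of $H-F_\tau$ from Corollary \ref{full_comp_heinz} to remove the absolute value, differentiate under the integral sign with the endpoint terms vanishing because $F_\tau(\tau)=H(\tau)$, and extract the cubic $8\tau^3-12\tau^2+1$ from the integrals of $t(1-t)$; your analysis of that cubic (intermediate value theorem plus $P'(\tau)=24\tau(\tau-1)<0$) is also correct. But the step you flag as the anticipated obstacle is exactly where the argument fails as proposed. Since $\partial_\tau F_\tau(t)=-t(1-t)\,\frac{d}{d\tau}\bigl[\frac{a\nabla b-H(\tau)}{\tau(1-\tau)}\bigr]$, writing $G(\tau)$ for the error one finds
$$G'(\tau)=\frac{8\tau^3-12\tau^2+1}{12}\cdot\frac{d}{d\tau}\left[\frac{a\nabla b-H(\tau)}{\tau(1-\tau)}\right],$$
up to the harmless factor of $2$ from integrating over $[0,1]$ rather than $[0,\tfrac12]$. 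The $H'(\tau)$ sitting inside the second factor is \emph{not} killed by the Leibniz boundary terms --- those only remove the $F_\tau(\tau)-H(\tau)$ contributions --- so the transcendental pieces do not cancel, and $G'$ is not of the form $\bigl(a\nabla b-H(\tau)\bigr)\cdot p(\tau)/q(\tau)$. Consequently your sign argument (``$a\nabla b-H(\tau)>0$, so the sign is governed by the cubic'') does not apply: the second factor is a genuinely transcendental expression in $\tau$ whose sign must be established separately, and that is the main analytic burden of the proof. In the paper this is the assertion $f(\tau)\le 0$, proved by a four-level chain of nested derivatives with a case split on $b>1$ versus $b<1$.

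The gap is fixable, and arguably more cleanly than the paper does it: the second factor above is precisely the derivative of the function $f(t)=\frac{1+c-(c^t+c^{1-t})}{t(1-t)}$ of Theorem \ref{first_monoton_theorem} (with $a=1$, $c=b$, up to the constant $\tfrac12$), and that theorem says this function is decreasing on $\left(0,\tfrac12\right)$, so the factor is $\le 0$ there. Combined with the sign of the cubic ($>0$ on $(0,\tau^*)$, $<0$ on $(\tau^*,\tfrac12)$), this gives $G'\le 0$ before $\tau^*$ and $G'\ge 0$ after, hence a minimum, and also resolves your min-versus-max worry. But as written, your proposal asserts a cancellation that does not occur and omits the argument that carries the real weight of the theorem.
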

\begin{proof}
Without loss of generality, we may assume $a=1.$ Since both $H$ and $F_{\tau}$ are symmetric about $t=\frac{1}{2}$, it suffices to investigate the integral over $\left[0,\frac{1}{2}\right].$ Moreover, it suffices to consider $\tau\in \left[0,\frac{1}{2}\right].$ Therefore, we are searching for $\tau^*\in \left[0,\frac{1}{2}\right]$ such that
$G(\tau):=\int_{0}^{1/2}|H(t)-F_{\tau^*}(t)|\;dt$ is minimum. By our remark, which followed Corollary \ref{full_comp_heinz}, we have
$H(t)\leq F_{\tau}(t)\;{\text {when}}\; t\leq \tau\;{\text{and}}\;H(t)\geq F_{\tau}(t)\;{\text{when}}\;\tau\leq t\leq\frac{1}{2}.$
Therefore, for $0<\tau<\frac{1}{2},$
\begin{eqnarray*}
G(\tau)&=&\int_{0}^{\frac{1}{2}}|H(t)-F_{\tau}(t)|dt\\
&=&\int_{0}^{\tau}\left(F_{\tau}(t)-H(t)\right)dt+\int_{\tau}^{\frac{1}{2}}\left(H(t)-F_{\tau}(t)\right)dt.
\end{eqnarray*}
Then
\begin{eqnarray*}
G'(\tau)&=&\left\{F_{\tau}(\tau)+\int_{0}^{\tau}\frac{\partial F_{\tau}(t)}{\partial \tau}dt-H(\tau)\right\}+\left\{F_{\tau}(\tau)-\int_{\tau}^{\frac{1}{2}}\frac{\partial F_{\tau}(t)}{\partial \tau}dt-H(\tau)\right\}\\
&=&\int_{0}^{\tau}\frac{\partial F_{\tau}(t)}{\partial \tau}dt-\int_{\tau}^{\frac{1}{2}}\frac{\partial F_{\tau}(t)}{\partial \tau}dt,
\end{eqnarray*}
where the last equation follows noting that $F_{\tau}(\tau)=H(\tau).$ Calculus computations imply
$$G'(\tau)=\frac{b^{-\tau}(8\tau^3-12\tau^2+1)}{24\tau^2(1-\tau)^2}f(\tau),$$
where
\begin{eqnarray*}
f(\tau)&=&(1-b^{\tau})(b^{\tau}-b)(2\tau-1)+(b^{2\tau}-b)(\tau-1)\tau \log b\\
&=&(b-b^{2\tau})g(\tau),
\end{eqnarray*}
for $$g(\tau)=\frac{(1-b^{\tau})(b^{\tau}-b)(2\tau-1)}{b-b^{2\tau}}-\tau(\tau-1)\log b.$$ We assert that $f\leq 0.$ Notice first that
$$g'(\tau)=\frac{(b^{\tau}-1)(b-b^{\tau})}{(b-b^{2\tau})^2}h(\tau),$$ where
$$h(\tau)=2b-2b^{2\tau}+(b+b^{2\tau})(2\tau-1)\log b.$$ Further,
$$h'(\tau)=2k(\tau)\log b\;{\text{where}}\;k(\tau)=b+b^{2\tau}(-1+(2\tau-1)\log b)$$ and $$k'(\tau)=2b^{2\tau}(2\tau-1)\log^2 b.$$

If $b>1$, then $k'(\tau)\leq 0$ because $\tau\leq\frac{1}{2},$ hence $k$ is decreasing and $k(\tau)\geq k\left(\frac{1}{2}\right)=0.$ Hence, $h'(\tau)\geq 0$ and $h(\tau)\leq h\left(\frac{1}{2}\right)=0$, which implies $g'(\tau)\leq 0$ and $g(\tau)\leq g(0)=0.$ Since $b>1,$ it follows that $f(\tau)\leq 0.$

If $b<1,$ then $k'(\tau)\leq 0$ and $k(\tau)\geq k\left(\frac{1}{2}\right)=0.$ Hence $h'(\tau)\leq 0$ and $h(\tau)\geq h\left(\frac{1}{2}\right)=0,$ which then implies $g'(\tau)\geq 0$ and $g(\tau)\geq g(0)=0.$ Since $0<b<1$, it follows that $f(\tau)\leq 0.$

Now let $\tau^*$ be the root of $\ell(\tau):=8\tau^3-12\tau^2+1$ in $\left(0,\frac{1}{2}\right),$ and notice that $\ell(\tau)\geq 0$ when $0\leq\tau\leq\tau^*$ and $\ell(\tau)\leq 0$ when $\tau^*\leq\tau\leq \frac{1}{2}.$\\
Since $f(\tau)\leq 0$ for all $0<\tau<1,$ and  $$G'(\tau)=\frac{b^{-\tau}(8\tau^3-12\tau^2+1)}{24\tau^2(1-\tau)^2}f(\tau),$$ it follows that $G'(\tau)\leq 0$ for $0<\tau\leq \tau^*$ and $G'(\tau)\geq 0$ for $\tau^*\leq \tau\leq \frac{1}{2}.$ This shows that $G$ attains its minimum at $\tau=\tau^*,$ which completes the proof.
\end{proof}

\subsection{Matrix versions}
In this part of the paper, we present some interesting matrix versions, based on the above scalar results. We emphasize that the significance of these results is the quadratic behavior of the refining terms, unlike the known results in the literature where linear refining terms have been discussed only, except in \cite{krnic}.

\subsubsection{Unitarily invariant norm versions}

The following is a quadratic refinement and reverse of the Heinz inequality
$$\|A^{\nu}XB^{1-\nu}+A^{1-\nu}XB^{\nu}\|_2\leq \|AX+XB\|_2, A,B\in\mathbb{M}_n^{+}, X\in\mathbb{M}_n.$$
\begin{theorem}\label{first_matrix}
Let $A,B\in\mathbb{M}_n^{+}, X\in\mathbb{M}_n$ and let $0< \nu,\tau< 1.$ If $\nu\leq r(\tau)$ or $\nu\geq R(\tau)$, then
\begin{align*}
\|A^{\nu}XB^{1-\nu}+A^{1-\nu}XB^{\nu}\|_2^2&+\frac{\nu(1-\nu)}{\tau(1-\tau)}\left[\|AX+XB\|_2^2-\|A^{\tau}XB^{1-\tau}+A^{1-\tau}XB^{\tau}\|_2^2\right]\\
&\leq \|AX+XB\|_2^2.
\end{align*}
If $r(\tau)\leq \nu\leq R(\tau)$, the inequality is reversed.
 \end{theorem}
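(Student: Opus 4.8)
The plan is to reduce the matrix inequality to the scalar inequality of Proposition \ref{squared_heinz_prop} by the standard diagonalization argument for Hilbert--Schmidt norms. Since both sides of the asserted inequality depend continuously on $A,B$ and $\mathbb{M}_n^{++}$ is dense in $\mathbb{M}_n^{+}$, it suffices to treat the case $A,B\in\mathbb{M}_n^{++}$; alternatively one may work directly in $\mathbb{M}_n^{+}$, since the scalar estimate below persists for nonnegative arguments by continuity. So I would write the spectral decompositions $A=U\,\diag(\lambda_1,\dots,\lambda_n)\,U^*$ and $B=V\,\diag(\mu_1,\dots,\mu_n)\,V^*$ with $U,V$ unitary and $\lambda_i,\mu_j>0$, and set $Y=U^*XV=(y_{ij})$. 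For any real $s$ one has $A^{s}XB^{1-s}+A^{1-s}XB^{s}=U\big[(\lambda_i^{s}\mu_j^{1-s}+\lambda_i^{1-s}\mu_j^{s})\,y_{ij}\big]_{ij}V^*$ and $AX+XB=U\big[(\lambda_i+\mu_j)\,y_{ij}\big]_{ij}V^*$, so unitary invariance of $\|\cdot\|_2$ gives
\begin{equation*}
\|A^{s}XB^{1-s}+A^{1-s}XB^{s}\|_2^2=\sum_{i,j}\big(\lambda_i^{s}\mu_j^{1-s}+\lambda_i^{1-s}\mu_j^{s}\big)^2|y_{ij}|^2,\qquad \|AX+XB\|_2^2=\sum_{i,j}(\lambda_i+\mu_j)^2|y_{ij}|^2.
\end{equation*}

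Substituting these expansions, the claimed matrix inequality becomes
\begin{align*}
\sum_{i,j}\Bigg[\big(\lambda_i^{\nu}\mu_j^{1-\nu}+\lambda_i^{1-\nu}\mu_j^{\nu}\big)^2 &+\frac{\nu(1-\nu)}{\tau(1-\tau)}\Big((\lambda_i+\mu_j)^2-\big(\lambda_i^{\tau}\mu_j^{1-\tau}+\lambda_i^{1-\tau}\mu_j^{\tau}\big)^2\Big)\\
&-(\lambda_i+\mu_j)^2\Bigg]\,|y_{ij}|^2\le 0,
\end{align*}
so it suffices to prove that each bracketed coefficient is nonpositive. This is exactly Proposition \ref{squared_heinz_prop}: taking $a=\lambda_i$ and $b=\mu_j$, multiplying \eqref{square_heinz_eq} by $4\nu(1-\nu)$ and rearranging yields, for $\nu\le r(\tau)$ or $\nu\ge R(\tau)$,
\begin{equation*}
\big(a^{\nu}b^{1-\nu}+a^{1-\nu}b^{\nu}\big)^2+\frac{\nu(1-\nu)}{\tau(1-\tau)}\Big((a+b)^2-\big(a^{\tau}b^{1-\tau}+a^{1-\tau}b^{\tau}\big)^2\Big)\le (a+b)^2,
\end{equation*}
which is precisely the nonpositivity of the bracket. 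Summing over $i,j$ against the nonnegative weights $|y_{ij}|^2$ then gives the matrix inequality. For $r(\tau)\le\nu\le R(\tau)$, Proposition \ref{squared_heinz_prop} gives the reversed scalar inequality, hence each bracketed coefficient is nonnegative, and summing yields the reversed matrix inequality.

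There is no substantial obstacle here: the heavy lifting was already done in Theorem \ref{first_monoton_theorem} and Proposition \ref{squared_heinz_prop}, and the matrix step is the routine diagonalization/entrywise-reduction device available for Hilbert--Schmidt norms. The only point requiring a little care is the passage through the positive semidefinite boundary (entries $\lambda_i=0$ or $\mu_j=0$, or $A,B$ not invertible), which is harmless because Proposition \ref{squared_heinz_prop} remains valid for nonnegative $a,b$ by continuity; this amounts to bookkeeping rather than a genuine difficulty.
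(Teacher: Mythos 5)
Your proof is correct and follows essentially the same route as the paper: spectral decompositions of $A$ and $B$, the entrywise (Schur product) expansion of the Hilbert--Schmidt norm, and a termwise application of Proposition \ref{squared_heinz_prop} with $a=\lambda_i$, $b=\mu_j$. Your extra remark on handling the semidefinite boundary by continuity is a small refinement the paper leaves implicit, but it does not change the argument.
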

\begin{proof}
 Let $A=U \Gamma U^*$ and $B=V \Lambda V^*$ be the spectral decomposition of $A$ and $B$. That is, $U,V$ are unitary matrices and $\Gamma, \Lambda$ are
the diagonal matrices whose diagonal entries are the eigenvalues $\{\lambda_i\}$ of $A$ and the eigenvalues $\{\mu_i\}$ of $B$, respectively. Denoting
$U^*XV$ by $Y$ and using $\circ$ for the Schur product, we have
$$A^{t}XB^{1-t}+A^{1-t}XB^{t}=U\left([\lambda_i^{t}\mu_j^{1-t}+\lambda_i^{1-t}\mu_j^{t}]\circ [y_{ij}]\right)V^*$$
and $AX+XB=U\left([\lambda_i+\mu_j]\circ [y_{ij}]\right)V^*.$ Now if $\nu\leq r(\tau)$ or $\nu\geq R(\tau)$ we have
\begin{align*}
 \|A^{\nu}&XB^{1-\nu}+A^{1-\nu}XB^{\nu}\|_2^2\\
&=\sum_{i,j}\left(\lambda_i^{\nu}\mu_j^{1-\nu}+\lambda_i^{1-\nu}\mu_j^{\nu}\right)^2|y_{ij}|^2\;({\text{Now\;apply\;}}\eqref{square_heinz_eq}\\
&\leq \sum_{i,j}(\lambda_i+\mu_j)^2|y_{ij}|^2-\frac{\nu(1-\nu)}{\tau(1-\tau)}\sum_{i,j}\left\{(\lambda_i+\mu_j)^2-
\left(\lambda_i^{\tau}\mu_j^{1-\tau}+\lambda_i^{1-\tau}\mu_j^{\tau}\right)^2\right\}|y_{ij}|^2\\
&=\|AX+XB\|_2^2-\frac{\nu(1-\nu)}{\tau(1-\tau)}\left[\|AX+XB\|_2^2-\|A^{\tau}XB^{1-\tau}+A^{1-\tau}XB^{\tau}\|_2^2\right].
\end{align*}
This completes the proof when $\nu\leq r(\tau)$ or $\nu\geq R(\tau).$ The other case follows similarly.
\end{proof}
In particular, if $\tau=\frac{1}{2}$, we obtain the quadratic refinement
$$\|A^{\nu}XB^{1-\nu}+A^{1-\nu}XB^{\nu}\|_2^2+4\nu(1-\nu)\left[\|AX+XB\|_2^2-\|A^{\frac{1}{2}}X B^{\frac{1}{2}}\|_2^2\right]\leq \|AX+XB\|_2^2,$$
which has been proved recently in \cite{krnic}.

Theorem \ref{first_matrix} has been proved by employing Proposition \ref{squared_heinz_prop}. A difference version maybe
 obtained by employing Corollary \ref{full_comp_heinz} as follows. The proof follows the same steps as Theorem \ref{first_matrix}, so we omit it.

\begin{theorem}\label{second_matrix}
Let $A,B\in\mathbb{M}_n^{+}, X\in\mathbb{M}_n$ and let $0< \nu,\tau< 1.$ If $\nu\leq r(\tau)$ or $\nu\geq R(\tau)$, then
\begin{align*}
&\frac{\|(AX+XB)-(A^{\tau}XB^{1-\tau}+A^{1-\tau}XB^{\tau})\|_2}{\tau(1-\tau)}\\
&\leq \frac{\|(AX+XB)-(A^{\nu}XB^{1-\nu}+A^{1-\nu}XB^{\nu})\|_2}{\nu(1-\nu)}.
\end{align*}
If $r(\tau)\leq \nu\leq R(\tau)$, the inequality is reversed.
 \end{theorem}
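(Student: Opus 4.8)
The plan is to run the argument of Theorem~\ref{first_matrix} almost verbatim, but using the difference comparison of Corollary~\ref{full_comp_heinz} in place of the squared comparison of Proposition~\ref{squared_heinz_prop}. First I would take spectral decompositions $A=U\Gamma U^{*}$, $B=V\Lambda V^{*}$ with $U,V$ unitary, $\Gamma=\diag(\lambda_i)$, $\Lambda=\diag(\mu_j)$, and set $Y=U^{*}XV=[y_{ij}]$. Exactly as in Theorem~\ref{first_matrix}, the Schur-product representation gives
\[
(AX+XB)-(A^{t}XB^{1-t}+A^{1-t}XB^{t})=U\bigl([\lambda_i+\mu_j-\lambda_i^{t}\mu_j^{1-t}-\lambda_i^{1-t}\mu_j^{t}]\circ[y_{ij}]\bigr)V^{*},
\]
so that, since $\|\cdot\|_2$ is unitarily invariant and $\|[c_{ij}]\circ[y_{ij}]\|_2^{2}=\sum_{i,j}|c_{ij}|^{2}|y_{ij}|^{2}$,
\[
\bigl\|(AX+XB)-(A^{t}XB^{1-t}+A^{1-t}XB^{t})\bigr\|_2^{2}=\sum_{i,j}\bigl(\lambda_i+\mu_j-\lambda_i^{t}\mu_j^{1-t}-\lambda_i^{1-t}\mu_j^{t}\bigr)^{2}|y_{ij}|^{2}.
\]

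Next I would fix a pair $(i,j)$ and put $a=\lambda_i$, $b=\mu_j$, noting that $\lambda_i+\mu_j-\lambda_i^{t}\mu_j^{1-t}-\lambda_i^{1-t}\mu_j^{t}=2\bigl(a\nabla b-H_t(a,b)\bigr)\ge 0$ by the Heinz inequality. Assuming first that $A,B\in\mathbb{M}_n^{++}$, so that all $\lambda_i,\mu_j>0$, Corollary~\ref{full_comp_heinz} yields, for $\nu\le r(\tau)$ or $\nu\ge R(\tau)$,
\[
\frac{\lambda_i+\mu_j-\lambda_i^{\tau}\mu_j^{1-\tau}-\lambda_i^{1-\tau}\mu_j^{\tau}}{\tau(1-\tau)}\le\frac{\lambda_i+\mu_j-\lambda_i^{\nu}\mu_j^{1-\nu}-\lambda_i^{1-\nu}\mu_j^{\nu}}{\nu(1-\nu)}.
\]
Since both sides are nonnegative, I may square; multiplying by $|y_{ij}|^{2}$ and summing over $i,j$ gives
\[
\frac{\bigl\|(AX+XB)-(A^{\tau}XB^{1-\tau}+A^{1-\tau}XB^{\tau})\bigr\|_2^{2}}{\tau^{2}(1-\tau)^{2}}\le\frac{\bigl\|(AX+XB)-(A^{\nu}XB^{1-\nu}+A^{1-\nu}XB^{\nu})\bigr\|_2^{2}}{\nu^{2}(1-\nu)^{2}},
\]
and taking square roots (all quantities are nonnegative and $\tau(1-\tau),\nu(1-\nu)>0$) gives the asserted inequality. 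The reversed inequality for $r(\tau)\le\nu\le R(\tau)$ follows identically from the reversed case of Corollary~\ref{full_comp_heinz}.

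The only point of real care is the reduction to positive definite $A,B$: Corollary~\ref{full_comp_heinz} is stated for $a,b>0$, whereas here $A,B$ are merely positive semidefinite. I would handle this by continuity, applying the established inequality to $A+\varepsilon I$ and $B+\varepsilon I$ and letting $\varepsilon\to 0^{+}$; alternatively, when $a=0$ or $b=0$ both numerators above collapse to $\max\{a,b\}$ and the comparison reduces to $\nu(1-\nu)\le\tau(1-\tau)$, which is precisely the hypothesis $\nu\le r(\tau)$ or $\nu\ge R(\tau)$. I do not anticipate any serious obstacle; the entire content of the argument is the scalar Corollary~\ref{full_comp_heinz}, and the one thing worth stressing is that here we square a scalar inequality between ratios (legitimate because the Heinz inequality makes the terms nonnegative) rather than importing an already-squared scalar estimate as in Theorem~\ref{first_matrix}.
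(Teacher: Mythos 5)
Your argument is exactly the one the paper intends: it omits the proof of Theorem~\ref{second_matrix}, saying only that it follows the steps of Theorem~\ref{first_matrix} with Corollary~\ref{full_comp_heinz} in place of Proposition~\ref{squared_heinz_prop}, and your Schur-product computation, entrywise application of the scalar inequality, squaring, summing, and taking square roots is precisely that. Your extra care about the semidefinite case (zero eigenvalues) is a small bonus the paper does not bother with, and is handled correctly.
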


On the other hand, a reverse of the Heinz inequality may be found using Corollary \ref{full_multi_arith_geo_cor} as follows.
\begin{theorem}
Let $A,B\in\mathbb{M}_n^{++}, X\in\mathbb{M}_n$ and let $0< \nu,\tau< 1.$ Then there exist two positive numbers $m\leq M,$ depending on $A,B$, such that
\begin{align*}
\|AX+XB\|_2\leq \left(\frac{M+m}{2\sqrt{m M}}\right)^{\frac{\nu(1-\nu)}{\tau(1-\tau)}}\|A^{\nu}XB^{1-\nu}+A^{1-\nu}XB^{\nu}\|_2
\end{align*}
if $\nu\leq r(\tau)$ or $\nu\geq R(\tau)$.
\end{theorem}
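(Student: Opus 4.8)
The plan is to reduce the matrix inequality to the scalar multiplicative comparison of Corollary~\ref{full_multi_arith_geo_cor} via the usual spectral/Schur-product machinery, exactly as in the proof of Theorem~\ref{first_matrix}. First I would write the spectral decompositions $A=U\Gamma U^*$ and $B=V\Lambda V^*$ with eigenvalues $\{\lambda_i\}$ of $A$ and $\{\mu_j\}$ of $B$, and set $Y=U^*XV=[y_{ij}]$, so that, using the Schur product $\circ$,
\begin{align*}
A^{t}XB^{1-t}+A^{1-t}XB^{t}&=U\bigl([\lambda_i^{t}\mu_j^{1-t}+\lambda_i^{1-t}\mu_j^{t}]\circ[y_{ij}]\bigr)V^*,\\
AX+XB&=U\bigl([\lambda_i+\mu_j]\circ[y_{ij}]\bigr)V^*.
\end{align*}
Because $A,B\in\mathbb{M}_n^{++}$, all $\lambda_i,\mu_j$ lie in a compact subinterval of $(0,\infty)$; I would let $m$ and $M$ be the minimum and maximum of $\{\lambda_i\}\cup\{\mu_j\}$, so $0<m\le M$ and these depend only on $A,B$.

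Next I would apply the scalar inequality \eqref{full_multi_arith_geo} (in the case $\nu\le r(\tau)$ or $\nu\ge R(\tau)$) to each pair $(\lambda_i,\mu_j)$ in the form
$$\lambda_i+\mu_j\le\left(\frac{(\lambda_i+\mu_j)/2}{\sqrt{\lambda_i\mu_j}}\right)^{\frac{\nu(1-\nu)}{\tau(1-\tau)}}\bigl(\lambda_i^{\nu}\mu_j^{1-\nu}+\lambda_i^{1-\nu}\mu_j^{\nu}\bigr),$$
which is just Corollary~\ref{full_multi_arith_geo_cor} rearranged. The key observation is that the ratio $\frac{(\lambda_i+\mu_j)/2}{\sqrt{\lambda_i\mu_j}}$ is the value at $t=\tfrac12$ of $\frac{x+1}{2\sqrt{x}}$ with $x=\lambda_i/\mu_j$, a function that is increasing in $x$ for $x\ge 1$ and decreasing for $x\le1$; hence it is maximized over all admissible ratios $\lambda_i/\mu_j\in[m/M,M/m]$ precisely at the endpoints, giving the uniform bound $\frac{(\lambda_i+\mu_j)/2}{\sqrt{\lambda_i\mu_j}}\le\frac{M+m}{2\sqrt{mM}}$. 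Since the exponent $\frac{\nu(1-\nu)}{\tau(1-\tau)}$ is positive, this yields the entrywise domination
$$(\lambda_i+\mu_j)^2|y_{ij}|^2\le\left(\frac{M+m}{2\sqrt{mM}}\right)^{\frac{2\nu(1-\nu)}{\tau(1-\tau)}}\bigl(\lambda_i^{\nu}\mu_j^{1-\nu}+\lambda_i^{1-\nu}\mu_j^{\nu}\bigr)^2|y_{ij}|^2$$
for every $i,j$. Summing over $i,j$ and recalling $\|Z\|_2^2=\sum_{i,j}|z_{ij}|^2$ together with the two displayed identities above gives
$$\|AX+XB\|_2^2\le\left(\frac{M+m}{2\sqrt{mM}}\right)^{\frac{2\nu(1-\nu)}{\tau(1-\tau)}}\|A^{\nu}XB^{1-\nu}+A^{1-\nu}XB^{\nu}\|_2^2,$$
and taking square roots delivers the claimed inequality.

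The only genuinely delicate point is the passage from the pointwise ratio $\frac{(\lambda_i+\mu_j)/2}{\sqrt{\lambda_i\mu_j}}$ to the single constant $\frac{M+m}{2\sqrt{mM}}$; everything else is the standard Schur-product reduction already used in Theorem~\ref{first_matrix}. I would make that monotonicity argument explicit: writing $\varphi(x)=\frac{x+1}{2\sqrt{x}}=\tfrac12(\sqrt{x}+1/\sqrt{x})$ one sees $\varphi$ is convex with minimum $1$ at $x=1$, so on any interval $[c,1/c]$ with $0<c\le1$ it attains its maximum at the endpoints, where it equals $\tfrac12(c^{-1/2}+c^{1/2})$; applying this with $c=m/M$ gives exactly $\frac{M+m}{2\sqrt{mM}}$, and since $\lambda_i/\mu_j$ and $\mu_j/\lambda_i$ both lie in $[m/M,M/m]$ the bound is uniform in $i,j$. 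Note that the hypothesis $\nu\le r(\tau)$ or $\nu\ge R(\tau)$ is used only to ensure that \eqref{full_multi_arith_geo} points in the right direction, and positive definiteness of $A,B$ is used to guarantee $m>0$ so that the constant is finite.
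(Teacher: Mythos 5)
Your proposal is correct and follows essentially the same route as the paper's own proof: the same spectral/Schur-product reduction from Theorem \ref{first_matrix}, the same entrywise application of \eqref{full_multi_arith_geo} with $m,M$ the extreme eigenvalues of $A$ and $B$, and the same uniform bound $\frac{\lambda_i+\mu_j}{2\sqrt{\lambda_i\mu_j}}\leq\frac{M+m}{2\sqrt{mM}}$. You merely make explicit (via the convexity of $\varphi(x)=\tfrac12(\sqrt{x}+1/\sqrt{x})$) the endpoint-maximization step that the paper dispatches with the phrase ``noting $m\leq\lambda_i,\mu_j\leq M$.''
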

\begin{proof}
We adopt the notations of Theorem \ref{first_matrix}. Since $A,B\in\mathbb{M}_n^{++}$, it follows that $\lambda_i,\mu_i>0.$ Let $m=\min\limits_{1\leq i\leq n}\{\lambda_i,\mu_i\}$ and $M=\max\limits_{1\leq i\leq n}\{\lambda_i,\mu_i\}.$ Now
\begin{align*}
\|AX+XB\|_2^2&=\sum_{i,j}(\lambda_i+\mu_j)^2|y_{ij}|^2\\
&\leq \sum_{i,j}\left(\frac{\lambda_i+\mu_j}{2\sqrt{\lambda_i\mu_j}}\right)^{2\frac{\nu(1-\nu)}{\tau(1-\tau)}}
\left(\lambda_i^{\nu}\mu_j^{1-\nu}+\lambda_i^{1-\nu}\mu_j^{\nu}\right)^2|y_{ij}|^2\;({\text{by}}\;\eqref{full_multi_arith_geo})\\
&\leq\left(\frac{M+m}{2\sqrt{m M}}\right)^{2\frac{\nu(1-\nu)}{\tau(1-\tau)}}\|A^{\nu}XB^{1-\nu}+A^{1-\nu}XB^{\nu}\|_2^2,
\end{align*}
where the last line is obtained noting $m\leq\lambda_i,\mu_j\leq M.$ This completes the proof.
\end{proof}

The function $\tau\mapsto \left(\frac{M+m}{2\sqrt{m M}}\right)^{2\frac{\nu(1-\nu)}{\tau(1-\tau)}}$ attains its minimum at $\tau=\frac{1}{2}.$ In this case, the above theorem is optimal and we have
$$\|AX+XB\|_2\leq \left(\frac{M+m}{2\sqrt{m M}}\right)^{4\nu(1-\nu)}\|A^{\nu}XB^{1-\nu}+A^{1-\nu}XB^{\nu}\|_2, 0\leq\nu\leq 1.$$ We should remark that the constant $\left(\frac{M+m}{2\sqrt{m M}}\right)^2$ is called the Kantorovich constant and has appeared in many recent studies treating matrix means.

In particular, if there exist $m,M>0$ such that $mI\leq A,B\leq MI,$ the above result is valid.\\
In the above results, we have presented matrix versions using the Hilbert-Schmidt norm. The following weaker version is valid for any unitarily invariant norm. For the proof, we need to recall the matrix H\"{o}lder inequality \cite{k}
\begin{equation}\label{holder}
\vert\vert\vert A^{t}XB^{1-t}\vert\vert\vert \leq \vert\vert\vert AX\vert\vert\vert ^{t}\vert\vert\vert XB\vert\vert\vert ^{1-t}, A,B\in\mathbb{M}_n^{+}, X\in\mathbb{M}_n,
\end{equation}
for any unitarily invariant norm $\vert\vert\vert \cdot\vert\vert\vert .$
\begin{theorem}\label{them_unit_needed}
Let $A,B\in\mathbb{M}_n^{+}, X\in\mathbb{M}_n$ and $0< \nu,\tau< 1.$ If $\nu\leq r(\tau)$ or $\nu\geq R(\tau)$ then
\begin{align*}
&\vert\vert\vert A^{\nu}XB^{1-\nu}+A^{1-\nu}XB^{\nu}\vert\vert\vert \\
&+\frac{\nu(1-\nu)}{\tau(1-\tau)}\left[\left(\vert\vert\vert AX\vert\vert\vert +\vert\vert\vert XB\vert\vert\vert \right)-
\left(\vert\vert\vert AX\vert\vert\vert ^{\tau}\vert\vert\vert XB\vert\vert\vert ^{1-\tau}+\vert\vert\vert AX\vert\vert\vert ^{1-\tau}\vert\vert\vert XB\vert\vert\vert ^{\tau}\right)\right]\\
&\leq \vert\vert\vert AX\vert\vert\vert +\vert\vert\vert XB\vert\vert\vert ,
\end{align*}
for any unitarily invariant norm $\vert\vert\vert \cdot\vert\vert\vert $ on $\mathbb{M}_n.$ In particular, if $\tau=\frac{1}{2}$ then
\begin{align*}
 \vert\vert\vert A^{\nu}XB^{1-\nu}+A^{1-\nu}XB^{\nu}\vert\vert\vert +4\nu(1-\nu)\left(\sqrt{\vert\vert\vert AX\vert\vert\vert }-\sqrt{\vert\vert\vert XB\vert\vert\vert }\right)^2\\
 \leq \vert\vert\vert AX\vert\vert\vert +\vert\vert\vert XB\vert\vert\vert ,
\end{align*}
for all $\nu\in [0,1]$.
\end{theorem}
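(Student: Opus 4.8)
The plan is to reduce the unitarily invariant norm statement to the scalar inequality \eqref{first_heinz} (equivalently, Corollary~\ref{full_comp_heinz} with its equivalent reformulation) by exploiting the triangle inequality together with the matrix H\"older inequality \eqref{holder}. First I would observe that by the matrix version of the Heinz inequality applied to the quantities $\vert\vert\vert AX\vert\vert\vert$ and $\vert\vert\vert XB\vert\vert\vert$ treated as positive scalars, the term $\vert\vert\vert AX\vert\vert\vert^{\tau}\vert\vert\vert XB\vert\vert\vert^{1-\tau}+\vert\vert\vert AX\vert\vert\vert^{1-\tau}\vert\vert\vert XB\vert\vert\vert^{\tau}$ is exactly the scalar Heinz mean (up to the factor $2$) of the two positive numbers $p:=\vert\vert\vert AX\vert\vert\vert$ and $q:=\vert\vert\vert XB\vert\vert\vert$. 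So the inequality to be proved is, after setting $p,q$ as above,
\begin{align*}
\vert\vert\vert A^{\nu}XB^{1-\nu}+A^{1-\nu}XB^{\nu}\vert\vert\vert
&\leq (p+q)-\frac{\nu(1-\nu)}{\tau(1-\tau)}\Big[(p+q)-\big(p^{\tau}q^{1-\tau}+p^{1-\tau}q^{\tau}\big)\Big].
\end{align*}

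Next I would bound the left-hand side. By the triangle inequality and then \eqref{holder},
\[
\vert\vert\vert A^{\nu}XB^{1-\nu}+A^{1-\nu}XB^{\nu}\vert\vert\vert
\leq \vert\vert\vert A^{\nu}XB^{1-\nu}\vert\vert\vert+\vert\vert\vert A^{1-\nu}XB^{\nu}\vert\vert\vert
\leq p^{\nu}q^{1-\nu}+p^{1-\nu}q^{\nu}.
\]
Thus it suffices to prove the purely scalar inequality
\[
\big(p^{\nu}q^{1-\nu}+p^{1-\nu}q^{\nu}\big)+\frac{\nu(1-\nu)}{\tau(1-\tau)}\Big[(p+q)-\big(p^{\tau}q^{1-\tau}+p^{1-\tau}q^{\tau}\big)\Big]\leq p+q,
\]
which, written in the mean notation, is precisely the statement $F_\tau$-versus-$H$ comparison: it says $H_\nu(p,q)\le F_\tau(p,q)$ where $F_\tau$ is the quadratic interpolator from \eqref{definition_F}. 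But this is exactly the content of Corollary~\ref{full_comp_heinz} (the remark following it), since the hypothesis $\nu\le r(\tau)$ or $\nu\ge R(\tau)$ is assumed. Equivalently, it follows by writing $\dfrac{p\nabla q-H_\nu(p,q)}{\nu(1-\nu)}\ge \dfrac{p\nabla q-H_\tau(p,q)}{\tau(1-\tau)}$ and rearranging. The reversed inequality case $r(\tau)\le\nu\le R(\tau)$ cannot be handled the same way, because the triangle inequality goes in the wrong direction; I would simply restrict the stated theorem to the refinement direction as written.

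For the special case $\tau=\tfrac12$, I would substitute $\tau=\tfrac12$ so that $\tfrac{\nu(1-\nu)}{\tau(1-\tau)}=4\nu(1-\nu)$ and $p^{1/2}q^{1/2}+p^{1/2}q^{1/2}=2\sqrt{pq}$, so the bracketed term becomes $p+q-2\sqrt{pq}=(\sqrt p-\sqrt q)^2=\big(\sqrt{\vert\vert\vert AX\vert\vert\vert}-\sqrt{\vert\vert\vert XB\vert\vert\vert}\big)^2$, which recovers the displayed corollary; alternatively this special case follows directly from \eqref{first_heinz} applied to $a=p$, $b=q$ combined with the triangle inequality and \eqref{holder} as above. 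The main obstacle I anticipate is not any deep estimate but rather being careful about which direction the reduction works: the triangle inequality and \eqref{holder} only produce an \emph{upper} bound on $\vert\vert\vert A^{\nu}XB^{1-\nu}+A^{1-\nu}XB^{\nu}\vert\vert\vert$, so only the refinement inequality (not its reverse) transfers to general unitarily invariant norms, and one must also confirm that $\vert\vert\vert AX\vert\vert\vert,\vert\vert\vert XB\vert\vert\vert$ are genuinely positive (or handle the degenerate case $X=0$ trivially) so that the scalar results apply.
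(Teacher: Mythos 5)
Your proposal is correct and follows essentially the same route as the paper: triangle inequality, then the matrix H\"older inequality \eqref{holder}, and finally the scalar comparison of Corollary \ref{full_comp_heinz} applied to $a=\vert\vert\vert AX\vert\vert\vert$, $b=\vert\vert\vert XB\vert\vert\vert$. Your added remarks (that only the refinement direction transfers, and that the degenerate case $\vert\vert\vert AX\vert\vert\vert=0$ or $\vert\vert\vert XB\vert\vert\vert=0$ is handled trivially) are sensible but do not change the argument.
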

\begin{proof}
When $\nu\leq r(\tau)$ or $\nu\geq R(\tau)$, we have
\begin{align*}
&\vert\vert\vert A^{\nu}XB^{1-\nu}+A^{1-\nu}XB^{\nu}\vert\vert\vert \\
&\leq \vert\vert\vert A^{\nu}XB^{1-\nu}\vert\vert\vert +\vert\vert\vert A^{1-\nu}XB^{\nu}\vert\vert\vert \;({\text{by\;the\;triangly\;inequality}})\\
&\leq \vert\vert\vert AX\vert\vert\vert ^{\nu}\vert\vert\vert XB\vert\vert\vert ^{1-\nu}+\vert\vert\vert AX\vert\vert\vert ^{1-\nu}\vert\vert\vert XB\vert\vert\vert ^{\nu}\;({\text{by}}\;\eqref{holder})\\
&\leq \left(\vert\vert\vert AX\vert\vert\vert +\vert\vert\vert XB\vert\vert\vert \right)\\
&\quad -\frac{\nu(1-\nu)}{\tau(1-\tau)}\left[\left(\vert\vert\vert AX\vert\vert\vert +\vert\vert\vert XB\vert\vert\vert \right)-
\left(\vert\vert\vert AX\vert\vert\vert ^{\tau}\vert\vert\vert XB\vert\vert\vert ^{1-\tau}+\vert\vert\vert AX\vert\vert\vert ^{1-\tau}\vert\vert\vert XB\vert\vert\vert ^{\tau}\right)\right],
\end{align*}
where we have used Corollary \ref{full_comp_heinz} to obtain the last inequality. This completes the proof.
\end{proof}
The case $\tau=\frac{1}{2}$ of the above Theorem has been shown in \cite{krnic}.

When $\nu=\frac{1}{2}$, the second inequality of Theorem \ref{them_unit_needed} is equivalent to the matrix Cauchy-Schwarz inequality $\||A^{\frac{1}{2}}XB^{\frac{1}{2}}\||\leq \||AX\||^{\frac{1}{2}}\||XB\||^{\frac{1}{2}}$, which is the case $t=\frac{1}{2}$ of \eqref{holder}. It should be mentioned here that this inequality can be concluded from the matrix arithmetic-geometric mean inequality
\begin{equation}\label{arith-geo_mat}
2\||A^{\frac{1}{2}}XB^{\frac{1}{2}}\||\leq \||AX+XB\||,
 \end{equation}
 which is the case $t=\frac{1}{2}$ of \eqref{matrix_heinz_intro}, as follows: In the inequality \eqref{arith-geo_mat}, replacing $A$ and $B$ by $tA$ and $\frac{1}{t}B$, respectively, where $t>0$, and using the triangle inequality, we have
 $$2\||A^{\frac{1}{2}}XB^{\frac{1}{2}}\||\leq t \||AX\||+\frac{1}{t}\||XB\||.$$
 Since this is true for all $t>0$, it follows that
 \begin{align*}
 2\||A^{\frac{1}{2}}XB^{\frac{1}{2}}\||&\leq \min_{t>0}\left(t \||AX\||+\frac{1}{t}\||XB\||\right)\\
 &=2\||AX\||^{\frac{1}{2}}\||XB\||^{\frac{1}{2}},
 \end{align*}
 which means $\||A^{\frac{1}{2}}XB^{\frac{1}{2}}\||\leq \||AX\||^{\frac{1}{2}}\||XB\||^{\frac{1}{2}}.$

 In view of the inequalities of Theorem \ref{them_unit_needed} and the inequality \eqref{arith-geo_mat}, it is reasonable to conjecture that
 $$2\||A^{\frac{1}{2}}XB^{\frac{1}{2}}+\left(\sqrt{\||AX\||}-\sqrt{\||XB\||}\right)^2\leq \||AX+XB\||,$$ which is a refinement of \eqref{arith-geo_mat}. However, this inequality is refuted by considering the two-dimensional example $A=\left[\begin{array}{cc}1&0\\ 0&\frac{3}{2}\end{array}\right], X=\left[\begin{array}{cc}1&0\\ 0&1\end{array}\right]$ and $B=\left[\begin{array}{cc}1&0\\ 0&\frac{1}{2}\end{array}\right].$ In this case, under the spectral norm $\|\;.\;\|$, we have
 \begin{align*}
 2\|A^{\frac{1}{2}}XB^{\frac{1}{2}}\|&+\left(\sqrt{\|AX\|}-\sqrt{\|XB\|}\right)^2=2+\left(\sqrt{\frac{3}{2}}-1\right)^2>2=\|AX+XB\|.
 \end{align*}

\subsubsection{Trace and determinant versions}
On the other hand, trace versions maybe obtained as follows. For the proof, we need to remind the reader of some facts about the trace.
 Recall that when $A,B\in\mathbb{M}_n^{+},$ one has
\begin{equation}\label{trace_ineq_needed_1}
\tr(A^{t}B^{1-t})\leq \tr^{t}(A)\tr^{1-t}(B), 0\leq t\leq 1.
\end{equation}
This inequality follows by log-convexity of the function $t\mapsto \tr(A^{t}B^{1-t}),$ \cite{Bou,saboam}. We present the following reverse that we need to prove our next result.
\begin{lemma}
Let Let $A,B\in\mathbb{M}_n^{++}$ and let $0\leq t\leq 1.$ Then
\begin{equation}\label{reverse_trace}
\tr(A^{t}B^{1-t})\left(\frac{\tr A\cdot\tr B}{\tr^2(A^{1/2}B^{1/2})}\right)^{R(t)}\geq \tr^{t}A\cdot\tr^{1-t}B,
\end{equation}
where $R(t)=\max\{t,1-t\}.$
\end{lemma}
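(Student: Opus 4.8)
The plan is to reduce the matrix trace inequality \eqref{reverse_trace} to a scalar inequality about the logarithm of a suitable function, exactly in the spirit of the earlier arguments in the paper. The key observation is that if we set $\alpha=\tr A$, $\beta=\tr B$ and $\gamma=\tr(A^{1/2}B^{1/2})$, then by \eqref{trace_ineq_needed_1} applied at $t=1/2$ we have $\gamma\le\sqrt{\alpha\beta}$, so the quantity $\frac{\alpha\beta}{\gamma^2}$ is at least $1$. Thus \eqref{reverse_trace} would follow from a general scalar statement: if $\varphi(t):=\tr(A^tB^{1-t})$ is log-convex on $[0,1]$ with $\varphi(0)=\beta$, $\varphi(1)=\alpha$, then $\varphi(t)\ge \alpha^t\beta^{1-t}\left(\frac{\alpha\beta}{\varphi(1/2)^2}\right)^{-R(t)}$. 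So first I would isolate this as the engine of the proof.

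Next, I would exploit log-convexity of $\psi(t):=\log\varphi(t)$. We know $\psi$ is convex on $[0,1]$, with $\psi(0)=\log\beta$ and $\psi(1)=\log\alpha$. The chord from $(0,\psi(0))$ to $(1,\psi(1))$ gives the upper bound $\psi(t)\le t\log\alpha+(1-t)\log\beta$, which is just \eqref{trace_ineq_needed_1}; for the reverse I need a lower bound on a convex function in terms of its value at $0$, $1/2$, and $1$. The natural tool is the three-point estimate: for a convex function $\psi$, knowing $\psi(0),\psi(1/2),\psi(1)$, one has on $[0,1/2]$ that $\psi(t)\ge \psi(0)+2t(\psi(1/2)-\psi(0))$ — wait, that's the chord again, which is an upper bound, not lower. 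The correct lower bound for a convex function uses the \emph{secant line through the two nearest nodes extended}, i.e. the tangent-type bound: on $[0,1/2]$, $\psi$ lies above the line through $(1/2,\psi(1/2))$ and $(1,\psi(1))$ extended backwards. That line at $t$ has value $\psi(1)+(1-t)\cdot 2(\psi(1/2)-\psi(1))\cdot(-1)$... more cleanly, the line through the points $t=1/2$ and $t=1$ is $L(t)=\psi(1)+(t-1)\cdot\frac{\psi(1)-\psi(1/2)}{1-1/2}=\psi(1)+2(t-1)(\psi(1)-\psi(1/2))$, and by convexity $\psi(t)\ge L(t)$ for $t\le 1/2$. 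Computing $\log\alpha - L(t) = 2(1-t)(\psi(1)-\psi(1/2)) = 2(1-t)\big(\log\alpha-\log\varphi(1/2)\big)$. Hmm, I want the error term to involve $\frac{\alpha\beta}{\varphi(1/2)^2}$, i.e. $\log\alpha+\log\beta-2\log\varphi(1/2)$, so I should instead take the symmetric combination: use the line through $(0,\psi(0))$ and $(1/2,\psi(1/2))$ for $t\ge 1/2$ as well, and average appropriately, or better, directly verify $\psi(t)\ge t\log\alpha+(1-t)\log\beta - R(t)\big(\log\alpha+\log\beta-2\log\varphi(1/2)\big)$ by checking it is a consequence of convexity at the three nodes. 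Indeed at $t=0$: RHS $=\log\beta-0=\psi(0)$; at $t=1$: RHS$=\log\alpha=\psi(1)$; at $t=1/2$: RHS$=\frac12\log\alpha+\frac12\log\beta-\frac12(\log\alpha+\log\beta-2\log\varphi(1/2))=\log\varphi(1/2)=\psi(1/2)$. So the RHS is exactly the \emph{piecewise-linear interpolant} of $\psi$ through the nodes $0,1/2,1$, and convexity of $\psi$ immediately gives $\psi\ge$ this interpolant on $[0,1]$. Exponentiating yields \eqref{reverse_trace}.

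So the steps in order are: (1) reduce to scalars $\alpha=\tr A,\beta=\tr B,\gamma=\tr(A^{1/2}B^{1/2})$ and record $\gamma^2\le\alpha\beta$ from \eqref{trace_ineq_needed_1}; (2) invoke log-convexity of $t\mapsto\tr(A^tB^{1-t})$ (cited references \cite{Bou,saboam}); (3) write down the piecewise-linear interpolant of $\psi(t)=\log\tr(A^tB^{1-t})$ through the three nodes $t=0,\tfrac12,1$, observe by inspection it equals $t\log\alpha+(1-t)\log\beta-R(t)\log\frac{\alpha\beta}{\gamma^2}$, and conclude $\psi(t)\ge$ this interpolant by convexity; (4) exponentiate and rearrange to obtain \eqref{reverse_trace}. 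I expect the only real obstacle is bookkeeping in step (3) — checking that the stated expression is indeed the nodal interpolant and getting the sign of the $R(t)$-term right on both halves $[0,\tfrac12]$ and $[\tfrac12,1]$ — but since $R(t)$ is itself piecewise linear with corner at $t=\tfrac12$, the match at the three nodes forces the identity, and no delicate estimate is needed. An alternative, more computational route avoiding the interpolant language would be to apply the basic convexity inequality $\psi\big((1-s)u+sv\big)\le(1-s)\psi(u)+s\psi(v)$ twice with $(u,v)=(0,\tfrac12)$ and $(u,v)=(\tfrac12,1)$ in reverse (i.e. writing $\tfrac12$ as a convex combination of $t$ and a node), which directly produces the two needed lower bounds; this is the form I would actually write out.
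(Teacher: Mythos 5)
Your step (3) as written contains a genuine error, in fact two that partially cancel. With $\psi(t)=\log\tr(A^tB^{1-t})$, $\alpha=\tr A=e^{\psi(1)}$, $\beta=\tr B=e^{\psi(0)}$, $\gamma=e^{\psi(1/2)}$, the right-hand side of the logged inequality, $t\log\alpha+(1-t)\log\beta-R(t)\log\frac{\alpha\beta}{\gamma^2}$, is \emph{not} the piecewise-linear interpolant of $\psi$ at the nodes $0,\tfrac12,1$: your nodal check at the endpoints is wrong because $R(0)=R(1)=1$, not $0$, so at $t=0$ the expression equals $2\psi(\tfrac12)-\psi(1)$, which is $\le\psi(0)$ but not equal to it in general. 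Moreover, even if it were the interpolant, convexity would give $\psi\le$ its nodal interpolant between the nodes --- the opposite direction of what you need. So the verification you actually present does not establish the lemma.

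The statement is nevertheless true, and the correct mechanism is precisely the one you computed and then discarded. On $[0,\tfrac12]$ one has $R(t)=1-t$, the $\psi(0)$-terms cancel, and the right-hand side is exactly $(2t-1)\psi(1)+2(1-t)\psi(\tfrac12)$, i.e.\ the secant line through $\bigl(\tfrac12,\psi(\tfrac12)\bigr)$ and $\bigl(1,\psi(1)\bigr)$ extended to the left of $\tfrac12$; a convex function lies \emph{above} the extension of a chord outside the chord interval, which gives the bound there, and symmetrically on $[\tfrac12,1]$ with the chord through $0$ and $\tfrac12$. Equivalently --- and this is exactly the paper's proof, as well as the ``alternative computational route'' in your last sentence --- for $t\le\tfrac12$ write $\tfrac12=\alpha t+(1-\alpha)\cdot 1$ with $\alpha=\frac{1}{2-2t}$, apply log-convexity to get $f\bigl(\tfrac12\bigr)\le f(t)^{\alpha}f(1)^{1-\alpha}$ for $f(t)=\tr(A^tB^{1-t})$, and rearrange to $f(t)\ge f(\tfrac12)^{2(1-t)}f(1)^{2t-1}$, which is the claimed inequality since $R(t)=1-t$; the case $t\ge\tfrac12$ is symmetric. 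So your closing fallback salvages the argument (and coincides with the paper), but the interpolant-based justification you wrote out as the main proof is incorrect and should be replaced by the extended-secant (or convex-combination) computation; the preliminary observation $\gamma^2\le\alpha\beta$ is true but not needed.
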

\begin{proof}
Let $f(t)=\tr(A^{t}B^{1-t}).$ Then $f$ is log-convex. For $0\leq t\leq \frac{1}{2},$ notice that
$$\frac{1}{2}=\alpha t+(1-\alpha)\;{\text{where}}\;\alpha=\frac{1}{2-2t}.$$ Using log-convexity of $f$, we have
\begin{align*}
f\left(\frac{1}{2}\right)&\leq f^{\alpha}(t)f^{1-\alpha}(1).
\end{align*}
simplifying this inequality implies the result for $0\leq t\leq \frac{1}{2}.$ Similar computations yield the result for $\frac{1}{2}\leq t\leq 1.$
\end{proof}

\begin{theorem}
Let $A,B\in\mathbb{M}_n^{++}$ and let $0<\nu,\tau<1.$ If $\nu\leq r(\tau)$ or $\nu\geq R(\tau)$, then
\begin{align*}
&\tr\left(A^{\nu}B^{1-\nu}+A^{1-\nu}B^{\nu}\right)\\
&+\frac{\nu(1-\nu)}{\tau(1-\tau)}\left[\tr(A+B)-\left(\frac{\tr A\cdot \tr B}{\tr^2\left(A^{\frac{1}{2}}B^{\frac{1}{2}}\right)}\right)^{R(\tau)}\tr\left(A^{\tau}B^{1-\tau}+A^{1-\tau}B^{\tau}\right)\right]\\
&\leq \tr(A+B).
\end{align*}
On the other hand, if $r(\tau)\leq \nu\leq R(\tau)$, then
\begin{align*}
tr(A+B)&\leq \left(\frac{\tr A\cdot \tr B}{\tr^2\left(A^{\frac{1}{2}}B^{\frac{1}{2}}\right)}\right)^{R(\nu)}\tr\left(A^{\nu}B^{1-\nu}+A^{1-\nu}B^{\nu}\right)\\
&+\tr\left[A+B-\left(A^{\tau}B^{1-\tau}+A^{1-\tau}B^{\tau}\right)\right].
\end{align*}
\end{theorem}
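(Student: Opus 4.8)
The plan is to deduce both inequalities from the scalar Corollary~\ref{full_comp_heinz}, applied with $a=\tr A$ and $b=\tr B$, using the trace H\"older inequality \eqref{trace_ineq_needed_1} to pass from traces of products to products of traces in one direction and its reverse \eqref{reverse_trace} to pass back in the other. Throughout, write $C:=\dfrac{\tr A\cdot\tr B}{\tr^2(A^{1/2}B^{1/2})}$ and note $C\geq 1$, since this is precisely \eqref{trace_ineq_needed_1} at $t=\tfrac12$, squared; consequently $C^{R(\tau)}\geq 1$ and $C^{R(\nu)}\geq 1$ because $R(\tau),R(\nu)\in(0,1)$. Recall also that $2H_t(\tr A,\tr B)=\tr^{t}A\cdot\tr^{1-t}B+\tr^{1-t}A\cdot\tr^{t}B$ and $2\bigl(\tr A\,\nabla\,\tr B\bigr)=\tr(A+B)$.

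For the first inequality, in the regime $\nu\leq r(\tau)$ or $\nu\geq R(\tau)$, I would first apply \eqref{trace_ineq_needed_1} to $A^{\nu}B^{1-\nu}$ and to $A^{1-\nu}B^{\nu}$ to obtain
\[
\tr(A^{\nu}B^{1-\nu}+A^{1-\nu}B^{\nu})\leq 2H_{\nu}(\tr A,\tr B),
\]
and then apply the reverse trace inequality \eqref{reverse_trace} to $A^{\tau}B^{1-\tau}$ and to $A^{1-\tau}B^{\tau}$ --- using that $R(1-\tau)=R(\tau)$, so the two summands carry the same exponent --- to obtain the companion bound
\[
2H_{\tau}(\tr A,\tr B)\leq C^{R(\tau)}\,\tr(A^{\tau}B^{1-\tau}+A^{1-\tau}B^{\tau}).
\]
Corollary~\ref{full_comp_heinz} with $a=\tr A$ and $b=\tr B$ gives, after multiplying by $2$,
\[
2H_{\nu}(\tr A,\tr B)\leq \tr(A+B)-\frac{\nu(1-\nu)}{\tau(1-\tau)}\bigl[\tr(A+B)-2H_{\tau}(\tr A,\tr B)\bigr].
\]
Chaining the first and third displays and then replacing $2H_{\tau}(\tr A,\tr B)$ inside the bracket by the larger quantity $C^{R(\tau)}\tr(A^{\tau}B^{1-\tau}+A^{1-\tau}B^{\tau})$ --- legitimate precisely because the coefficient $\nu(1-\nu)/\tau(1-\tau)$ is positive and the bracket enters with a minus sign --- and finally rearranging, yields the stated refinement.

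For the reverse inequality I would run the dual argument with the roles of \eqref{trace_ineq_needed_1} and \eqref{reverse_trace} interchanged: apply \eqref{reverse_trace} to the $\nu$-terms to get $2H_{\nu}(\tr A,\tr B)\leq C^{R(\nu)}\tr(A^{\nu}B^{1-\nu}+A^{1-\nu}B^{\nu})$, apply \eqref{trace_ineq_needed_1} to the $\tau$-terms to get $\tr(A^{\tau}B^{1-\tau}+A^{1-\tau}B^{\tau})\leq 2H_{\tau}(\tr A,\tr B)$, and invoke the monotonicity of the Heinz mean $t\mapsto H_t(\tr A,\tr B)$ to compare $H_{\tau}(\tr A,\tr B)$ and $H_{\nu}(\tr A,\tr B)$ in the pertinent range; adding $\tr(A+B)-\tr(A^{\tau}B^{1-\tau}+A^{1-\tau}B^{\tau})$ to both sides then produces the asserted bound.

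The translations via \eqref{trace_ineq_needed_1} and \eqref{reverse_trace} and the concluding algebra are routine. The delicate point --- and where I expect the main obstacle --- is the bookkeeping of inequality directions once the Kantorovich-type factor $C^{R(\cdot)}$ is spliced in: one must check that substituting $C^{R(\tau)}\tr(A^{\tau}B^{1-\tau}+A^{1-\tau}B^{\tau})$ for $2H_{\tau}(\tr A,\tr B)$ inside the bracket $\tr(A+B)-2H_{\tau}(\tr A,\tr B)$ preserves the inequality, and that \eqref{reverse_trace} is genuinely applicable to both summands $A^{\tau}B^{1-\tau}$ and $A^{1-\tau}B^{\tau}$ with the common exponent $R(\tau)=R(1-\tau)$.
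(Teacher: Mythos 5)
Your treatment of the first inequality is correct and is essentially the paper's own proof: apply Corollary \ref{full_comp_heinz} with $a=\tr A$, $b=\tr B$, then use \eqref{trace_ineq_needed_1} on the $\nu$-terms and \eqref{reverse_trace} on the $\tau$-terms (noting $R(1-\tau)=R(\tau)$), and the substitution inside the bracket is legitimate for exactly the reason you give.

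The reverse half, however, has a genuine gap. Writing $C:=\frac{\tr A\cdot\tr B}{\tr^{2}(A^{1/2}B^{1/2})}$ as you do, your plan amounts to the chain $\tr(A^{\tau}B^{1-\tau}+A^{1-\tau}B^{\tau})\le 2H_{\tau}(\tr A,\tr B)\le 2H_{\nu}(\tr A,\tr B)\le C^{R(\nu)}\tr(A^{\nu}B^{1-\nu}+A^{1-\nu}B^{\nu})$, so the middle step requires $H_{\tau}(\tr A,\tr B)\le H_{\nu}(\tr A,\tr B)$. But in the regime $r(\tau)\le\nu\le R(\tau)$ the parameter $\nu$ is at least as close to $\tfrac12$ as $\tau$, and $t\mapsto H_{t}$ is convex, symmetric about $\tfrac12$ and minimal there, so the comparison runs the other way: $H_{\nu}\le H_{\tau}$. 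Thus the monotonicity you invoke fails precisely where you need it, and this half of your argument collapses. The paper's ``follows similarly'' keeps the reversed Corollary \ref{full_comp_heinz} (which you dropped in this half): from $\tr(A+B)\le 2H_{\nu}(\tr A,\tr B)+\frac{\nu(1-\nu)}{\tau(1-\tau)}\left[\tr(A+B)-2H_{\tau}(\tr A,\tr B)\right]$ one bounds $2H_{\nu}$ above via \eqref{reverse_trace} and replaces $2H_{\tau}$ inside the bracket by the smaller quantity $\tr(A^{\tau}B^{1-\tau}+A^{1-\tau}B^{\tau})$ via \eqref{trace_ineq_needed_1}. Be aware, though, that this dual argument yields the factor $\frac{\nu(1-\nu)}{\tau(1-\tau)}$ in front of $\tr\left[A+B-(A^{\tau}B^{1-\tau}+A^{1-\tau}B^{\tau})\right]$, and since that factor is $\ge 1$ in this regime while the bracket is nonnegative, it cannot simply be dropped; indeed, for $1\times 1$ matrices (where $C=1$) the printed coefficient-one form reduces to $H_{\tau}(a,b)\le H_{\nu}(a,b)$, which is false (take $\nu=\tfrac12$, $\tau\ne\tfrac12$, $a\ne b$). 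So no argument along your lines reaches the second inequality exactly as printed; the correct conclusion of the dual argument carries the ratio $\frac{\nu(1-\nu)}{\tau(1-\tau)}$.
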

\begin{proof}
If $\nu\leq r(\tau)$ or $\nu\geq R(\tau),$ then Corollary \ref{full_comp_heinz} implies
$$a+b\geq (a^{\nu}b^{1-\nu}+a^{1-\nu}b^{\nu})+\frac{\nu(1-\nu)}{\tau(1-\tau)}(a+b-(a^{\tau}b^{1-\tau}+a^{1-\tau}b^{\tau})),$$ for $a,b>0.$ In particular, let $a=\tr A$ and $b=\tr B,$ then apply \eqref{trace_ineq_needed_1} and \eqref{reverse_trace} to obtain
\begin{align*}
 \tr(A+B)&\geq \tr^{\nu}A\cdot\tr^{1-\nu}B+\tr^{1-\nu}A\cdot\tr^{\nu}B\\
&+\frac{\nu(1-\nu)}{\tau(1-\tau)}\left[\tr(A+B)-(\tr^{\tau}A\tr^{1-\tau}B+\tr^{1-\tau}A\tr^{\tau}B)\right]\\
&\geq \tr\left(A^{\nu}B^{1-\nu}+A^{1-\nu}B^{\nu}\right)\\
&+\frac{\nu(1-\nu)}{\tau(1-\tau)}\left[\tr(A+B)-\left(\frac{\tr A\cdot \tr B}{\tr^2\left(A^{\frac{1}{2}}B^{\frac{1}{2}}\right)}\right)^{R(\tau)}\tr\left(A^{\tau}B^{1-\tau}+A^{1-\tau}B^{\tau}\right)\right].
\end{align*}
The other inequality follows similarly.
\end{proof}

Our next result is a determinant version, where quadratic refinements are provided.
\begin{theorem}\label{det_thm}
Let $A,B\in\mathbb{M}_n^{++}$ and let $0<\nu,\tau<1.$ If $\nu\leq r(\tau)$ or $\nu\geq R(\tau),$ then
\begin{align*}
\det(A+B)^{\frac{1}{n}}&\geq \det(A\#_{\nu}B+A\#_{1-\nu}B)^{\frac{1}{n}}\\
&+\frac{\nu(1-\nu)}{\tau(1-\tau)}\det(A+B-(A\#_{\tau}B+A\#_{1-\tau}B))^{\frac{1}{n}}.
\end{align*}
\end{theorem}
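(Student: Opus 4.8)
The plan is to reduce the matrix determinant inequality to the scalar inequality of Corollary~\ref{full_comp_heinz} via two standard determinantal facts about positive definite matrices: the Minkowski-type superadditivity of $C\mapsto\det(C)^{1/n}$ on $\mathbb{M}_n^{++}$, namely $\det(C+D)^{1/n}\geq \det(C)^{1/n}+\det(D)^{1/n}$, and the fact that for $A,B\in\mathbb{M}_n^{++}$ the geometric-mean matrices satisfy $\det(A\#_t B)=\det(A)^{1-t}\det(B)^{t}$. I would first observe that, setting $a=\det(A)^{1/n}$ and $b=\det(B)^{1/n}$, the scalar Corollary~\ref{full_comp_heinz} (rewritten as in the trace proof) gives, when $\nu\leq r(\tau)$ or $\nu\geq R(\tau)$,
\begin{equation*}
a+b\geq (a^{\nu}b^{1-\nu}+a^{1-\nu}b^{\nu})+\frac{\nu(1-\nu)}{\tau(1-\tau)}\bigl(a+b-(a^{\tau}b^{1-\tau}+a^{1-\tau}b^{\tau})\bigr).
\end{equation*}

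The second step is to translate each term of this scalar inequality into a determinantal quantity. By the Minkowski inequality, $\det(A+B)^{1/n}\geq a+b$, which handles the left side. For the right side I would use the Minkowski inequality in the reverse direction together with the determinant identity for $\#_t$: specifically,
\begin{equation*}
\det(A\#_{\nu}B+A\#_{1-\nu}B)^{1/n}\leq \det(A\#_{\nu}B)^{1/n}+\det(A\#_{1-\nu}B)^{1/n}
= a^{1-\nu}b^{\nu}+a^{\nu}b^{1-\nu},
\end{equation*}
and similarly $\det(A+B-(A\#_{\tau}B+A\#_{1-\tau}B))^{1/n}\leq a+b-(a^{\tau}b^{1-\tau}+a^{1-\tau}b^{\tau})$, provided the matrix $A+B-(A\#_{\tau}B+A\#_{1-\tau}B)$ is itself positive semidefinite (which it is, by the matrix Heinz inequality $A\#_{\tau}B+A\#_{1-\tau}B\leq A+B$, i.e.\ \eqref{matrix_heinz_intro} with $X=I$, after noting $\det^{1/n}$ is superadditive on $\mathbb{M}_n^{+}$ as well). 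Chaining these three estimates with the scalar inequality yields exactly the claimed bound.

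The main subtlety — and the step I would be most careful about — is the \emph{direction} of the Minkowski inequality needed at each occurrence: it is used as a \emph{lower} bound for $\det(A+B)^{1/n}$ but as an \emph{upper} bound for the two refining determinants, and since the scalar inequality being invoked has the refining term \emph{subtracted} on the right, these directions are precisely what makes the chain close. I would therefore write the argument as: bound $\det(A+B)^{1/n}$ below by $a+b$, bound $a+b$ below by the scalar right-hand side (Corollary~\ref{full_comp_heinz}), then bound $a^{\nu}b^{1-\nu}+a^{1-\nu}b^{\nu}$ and $a+b-(a^{\tau}b^{1-\tau}+a^{1-\tau}b^{\tau})$ below by $\det(A\#_{\nu}B+A\#_{1-\nu}B)^{1/n}$ and $\det(A+B-(A\#_{\tau}B+A\#_{1-\tau}B))^{1/n}$ respectively, using superadditivity of $\det^{1/n}$ in reverse and the multiplicativity $\det(A\#_t B)=\det(A)^{1-t}\det(B)^{t}$. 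The positivity of $A+B-(A\#_{\tau}B+A\#_{1-\tau}B)$, which is what legitimizes applying $\det^{1/n}$ and its superadditivity to that difference, deserves an explicit sentence citing \eqref{matrix_heinz_intro}.
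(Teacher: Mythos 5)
There is a genuine gap: both ``reverse'' uses of the Minkowski determinant inequality in your chain are invalid, because that inequality only goes one way. For $C,D\in\mathbb{M}_n^{+}$ one has $\det(C+D)^{1/n}\geq \det(C)^{1/n}+\det(D)^{1/n}$; applied to $C=A\#_{\nu}B$, $D=A\#_{1-\nu}B$ (and using $\det(A\#_tB)=\det(A)^{1-t}\det(B)^{t}$, which is indeed correct) this gives $\det(A\#_{\nu}B+A\#_{1-\nu}B)^{1/n}\geq a^{\nu}b^{1-\nu}+a^{1-\nu}b^{\nu}$ with $a=\det(A)^{1/n}$, $b=\det(B)^{1/n}$ --- exactly the opposite of the upper bound your argument needs, and there is no reverse superadditivity of $\det(\cdot)^{1/n}$ to appeal to. The intermediate claim is in fact false: take $n=2$, $A=I$, $B=\diag(4,1)$, $\nu=\tfrac14$; then $\det(A\#_{\nu}B+A\#_{1-\nu}B)^{1/2}=(6\sqrt{2})^{1/2}\approx 2.913$, whereas $a^{\nu}b^{1-\nu}+a^{1-\nu}b^{\nu}=2^{3/4}+2^{1/4}\approx 2.871$. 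The companion bound $\det\bigl(A+B-(A\#_{\tau}B+A\#_{1-\tau}B)\bigr)^{1/n}\leq a+b-(a^{\tau}b^{1-\tau}+a^{1-\tau}b^{\tau})$ suffers from the same problem. So the reduction to the two scalars $\det(A)^{1/n},\det(B)^{1/n}$ discards too much information, and the chain cannot close.

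The paper's proof avoids this by reducing to eigenvalues rather than to determinants of $A$ and $B$: set $X=A^{-1/2}BA^{-1/2}$ with eigenvalues $\lambda_i>0$, apply Corollary \ref{full_comp_heinz} to the pair $(1,\lambda_i)$ for each $i$, so that $1+\lambda_i\geq(\lambda_i^{\nu}+\lambda_i^{1-\nu})+\frac{\nu(1-\nu)}{\tau(1-\tau)}\bigl(1+\lambda_i-(\lambda_i^{\tau}+\lambda_i^{1-\tau})\bigr)$, then use the Minkowski product inequality $\bigl(\prod_i(a_i+b_i)\bigr)^{1/n}\geq\bigl(\prod_i a_i\bigr)^{1/n}+\bigl(\prod_i b_i\bigr)^{1/n}$ in its correct superadditive direction on the resulting product over $i$, and finally multiply through by $\det(A)^{1/n}$ to return from $X$ to $A,B$. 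If you want to salvage your write-up, replace the $a=\det(A)^{1/n}$, $b=\det(B)^{1/n}$ reduction by this eigenvalue-wise application of the scalar inequality; your observation that $A+B-(A\#_{\tau}B+A\#_{1-\tau}B)\geq 0$ (from \eqref{matrix_heinz_intro} with $X=I$) is then not needed as a separate ingredient, since positivity of each factor $1+\lambda_i-(\lambda_i^{\tau}+\lambda_i^{1-\tau})$ is already supplied by the scalar Heinz inequality.
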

\begin{proof}
Let $X=A^{-\frac{1}{2}}BA^{-\frac{1}{2}}$ and let $\lambda_i$ denote the $i$-th eigenvalue of $X$. Then noting Corollary \ref{full_comp_heinz} and the Minkowski inequality
$$\left(\prod_{i=1}^{n}a_i\right)^{\frac{1}{n}}+\left(\prod_{i=1}^{n}b_i\right)^{\frac{1}{n}}\leq \left(\prod_{i=1}^{n}(a_i+b_i)\right)^{\frac{1}{n}},a_i,b_i>0,$$ we have
\begin{align*}
 \det(I+X)^{\frac{1}{n}}&=\left(\prod_{i=1}^{n}\lambda_i(I+X)\right)^{\frac{1}{n}}\\
&=\left(\prod_{i=1}^{n}(1+\lambda_i(X))\right)^{\frac{1}{n}}\\
&\geq \prod_{i=1}^{n}\left[(\lambda_i^{\nu}+\lambda_i^{1-\nu})+\frac{\nu(1-\nu)}{\tau(1-\tau)}(1+\lambda_i-(\lambda_i^{\tau}+\lambda_i^{1-\tau}))\right]^{\frac{1}{n}}\\
&\geq \prod_{i=1}^{n}\left[(\lambda_i^{\nu}+\lambda_i^{1-\nu})\right]^{\frac{1}{n}}
+\prod_{i=1}^{n}\left[\frac{\nu(1-\nu)}{\tau(1-\tau)}(1+\lambda_i-(\lambda_i^{\tau}+\lambda_i^{1-\tau}))\right]^{\frac{1}{n}}\\
&=\left(\prod_{i=1}^{n}\lambda_i(X^{\nu}+X^{1-\nu})\right)^{\frac{1}{n}}
+\frac{\nu(1-\nu)}{\tau(1-\tau)}\left(\prod_{i=1}^{n}\lambda_i(I+X-(X^{\tau}+X^{1-\tau}))\right]^{\frac{1}{n}}\\
&= \det(X^{\nu}+X^{1-\nu})^{\frac{1}{n}}+\frac{\nu(1-\nu)}{\tau(1-\tau)}\det(I+X-(X^{\tau}+X^{1-\tau}))^{\frac{1}{n}}.
\end{align*}
Then multiplying both sides by $\det A$ and utilizing simple properties of the determinant, we get the required inequality.
\end{proof}
Notice that the above theorem provides a refinement of the well known determinant inequality
$$\det(A\#_{\nu}B+A\#_{1-\nu}B)\leq \det(A+B).$$ In particular, when $\tau=\frac{1}{2},$ Theorem \ref{det_thm} reads as follows
$$\det(A\#_{\nu}B+A\#_{1-\nu}B)^{\frac{1}{n}}+4\nu(1-\nu)\det(A+B-2\;A\#B)^{\frac{1}{n}}\leq \det(A+B)^{\frac{1}{n}}, 0\leq \nu\leq 1.$$

Following the proof of Theorem \ref{det_thm} and using Proposition \ref{squared_heinz_prop}, we obtain the following squared version for the determinant of the Heinz means.

\begin{theorem}\label{det_thm_square}
Let $A,B\in\mathbb{M}_n^{++}$ and let $0<\nu,\tau<1.$ If $\nu\leq r(\tau)$ or $\nu\geq R(\tau),$ then
\begin{align*}
\det(A+B)^{\frac{2}{n}}&\geq \det(A\#_{\nu}B+A\#_{1-\nu}B)^{\frac{2}{n}}\\
&+\left(\frac{\nu(1-\nu)}{\tau(1-\tau)}\right)^2\det(A+B-(A\#_{\tau}B+A\#_{1-\tau}B))^{\frac{2}{n}}.
\end{align*}
\end{theorem}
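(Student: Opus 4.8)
The plan is to mimic the proof of Theorem~\ref{det_thm}, replacing the linear comparison from Corollary~\ref{full_comp_heinz} with the squared comparison from Proposition~\ref{squared_heinz_prop}, and replacing the Minkowski inequality for $n$-th roots by the same inequality applied to the \emph{squared} geometric means. First I would set $X=A^{-1/2}BA^{-1/2}$ and let $\lambda_1,\dots,\lambda_n>0$ denote its eigenvalues, so that $\det(A+B)^{1/n}=(\det A)^{1/n}\big(\prod_i(1+\lambda_i)\big)^{1/n}$ and analogous identities hold for $\det(A\#_\nu B+A\#_{1-\nu}B)$ and $\det(A+B-(A\#_\tau B+A\#_{1-\tau}B))$ after dividing out $\det A$. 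Thus it suffices to prove the scalar-indexed inequality
\begin{align*}
\left(\prod_{i=1}^n(1+\lambda_i)^2\right)^{\frac{1}{n}}&\geq\left(\prod_{i=1}^n(\lambda_i^{\nu}+\lambda_i^{1-\nu})^2\right)^{\frac{1}{n}}\\
&\quad+\left(\frac{\nu(1-\nu)}{\tau(1-\tau)}\right)^2\left(\prod_{i=1}^n\big(1+\lambda_i-(\lambda_i^{\tau}+\lambda_i^{1-\tau})\big)^2\right)^{\frac{1}{n}}.
\end{align*}

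Next, for each fixed $i$ I would apply Proposition~\ref{squared_heinz_prop} with $a=1$, $b=\lambda_i$ (after clearing the factor $4$, i.e.\ working with $(1\nabla\lambda_i)^2-H_\nu(1,\lambda_i)^2$ etc., which scales out harmlessly): since $\nu\leq r(\tau)$ or $\nu\geq R(\tau)$, \eqref{square_heinz_eq} gives
$$(1+\lambda_i)^2\geq(\lambda_i^{\nu}+\lambda_i^{1-\nu})^2+\left(\frac{\nu(1-\nu)}{\tau(1-\tau)}\right)^2\Big((1+\lambda_i)^2-(\lambda_i^{\tau}+\lambda_i^{1-\tau})^2\Big),$$
where I use that $(1+\lambda_i)^2-(\lambda_i^{\tau}+\lambda_i^{1-\tau})^2\geq\big(1+\lambda_i-(\lambda_i^{\tau}+\lambda_i^{1-\tau})\big)^2$ because for positive reals $u\geq v>0$ one has $u^2-v^2=(u-v)(u+v)\geq(u-v)^2$ — here $u=1+\lambda_i$, $v=\lambda_i^{\tau}+\lambda_i^{1-\tau}$, and $u\geq v$ is the Heinz inequality. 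Therefore each bracketed term $1+\lambda_i-(\lambda_i^{\tau}+\lambda_i^{1-\tau})$ is nonnegative and
$$(1+\lambda_i)^2\geq(\lambda_i^{\nu}+\lambda_i^{1-\nu})^2+\left(\frac{\nu(1-\nu)}{\tau(1-\tau)}\right)^2\big(1+\lambda_i-(\lambda_i^{\tau}+\lambda_i^{1-\tau})\big)^2.$$

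Finally I would take the product over $i$ of these inequalities, extract $n$-th roots (monotonicity of $t\mapsto t^{1/n}$ on $[0,\infty)$), and apply the Minkowski-type inequality $\big(\prod a_i\big)^{1/n}+\big(\prod b_i\big)^{1/n}\leq\big(\prod(a_i+b_i)\big)^{1/n}$ with $a_i=(\lambda_i^{\nu}+\lambda_i^{1-\nu})^2$ and $b_i=\big(\tfrac{\nu(1-\nu)}{\tau(1-\tau)}\big)^2\big(1+\lambda_i-(\lambda_i^{\tau}+\lambda_i^{1-\tau})\big)^2$, pulling the constant $\big(\tfrac{\nu(1-\nu)}{\tau(1-\tau)}\big)^2$ out of the second product. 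Reinterpreting the products as determinants of $I+X$, $X^{\nu}+X^{1-\nu}$, and $I+X-(X^{\tau}+X^{1-\tau})$, then multiplying through by $\det A$ and using $\det(A)\det(f(X))=\det(A^{1/2}f(X)A^{1/2})$ together with $A^{1/2}X^{t}A^{1/2}=A\#_t B$, yields the claimed inequality. The main obstacle — though a minor one — is the bookkeeping to justify that $(1+\lambda_i)^2-(\lambda_i^\tau+\lambda_i^{1-\tau})^2$ dominates the \emph{squared difference} rather than just appearing as itself; this is exactly the elementary step $u^2-v^2\geq(u-v)^2$ for $u\geq v>0$, which is where the Heinz inequality $1+\lambda_i\geq\lambda_i^\tau+\lambda_i^{1-\tau}$ is silently invoked. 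Everything else parallels Theorem~\ref{det_thm} verbatim, so I would simply remark that the proof follows the same steps with \eqref{square_heinz_eq} in place of Corollary~\ref{full_comp_heinz}.
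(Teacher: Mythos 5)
Your proof is correct and is exactly the route the paper intends: the paper itself offers no details beyond the remark that one repeats the proof of Theorem \ref{det_thm} with Proposition \ref{squared_heinz_prop} in place of Corollary \ref{full_comp_heinz}, and your bridging step $u^{2}-v^{2}=(u-v)(u+v)\geq (u-v)^{2}$ for $u\geq v>0$ is precisely what is needed to convert the difference of squares supplied by \eqref{square_heinz_eq} into the squared difference appearing in the statement. One small imprecision: \eqref{square_heinz_eq} yields the coefficient $\frac{\nu(1-\nu)}{\tau(1-\tau)}$ to the \emph{first} power, so you should state explicitly that the squared coefficient is obtained because this ratio is at most $1$ under the hypothesis $\nu\leq r(\tau)$ or $\nu\geq R(\tau)$ while the bracket it multiplies is nonnegative by the Heinz inequality.
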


The above are additive determinant versions. An interesting multiplicative version can be found using Corollary \ref{full_multi_arith_geo_cor}. The proof is similar to that of Theorem \ref{det_thm}, and hence is left to the reader.
\begin{theorem}
Let $A,B\in\mathbb{M}_n^{++}$ and let $0<\nu,\tau<1.$ If $\nu\leq r(\tau)$ or $\nu\geq R(\tau),$ then
\begin{align*}
\left(\frac{\det(A+B)}{\det(A\#_{\nu}B+A\#_{1-\nu}B)}\right)^{\frac{1}{\nu(1-\nu)}}\leq \left(\frac{\det(A+B)}{\det(A\#_{\tau}B+A\#_{1-\tau}B)}\right)^{\frac{1}{\tau(1-\tau)}}
\end{align*}
On the other hand, if $r(\tau)\leq\nu\leq R(\tau)$, the inequality is reversed.
\end{theorem}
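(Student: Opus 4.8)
The plan is to diagonalize the problem exactly as in the proof of Theorem~\ref{det_thm}, thereby reducing the matrix statement to the scalar multiplicative comparison of Corollary~\ref{full_multi_arith_geo_cor}. First I would set $X=A^{-\frac12}BA^{-\frac12}$, which is positive definite with eigenvalues $\lambda_1,\dots,\lambda_n>0$, and record the identities $A+B=A^{\frac12}(I+X)A^{\frac12}$ and $A\#_tB+A\#_{1-t}B=A^{\frac12}(X^t+X^{1-t})A^{\frac12}$. Taking determinants, the common factor $\det A$ cancels in the ratio, giving
\[
\frac{\det(A+B)}{\det(A\#_\nu B+A\#_{1-\nu}B)}=\frac{\det(I+X)}{\det(X^\nu+X^{1-\nu})}=\prod_{i=1}^{n}\frac{1+\lambda_i}{\lambda_i^{\nu}+\lambda_i^{1-\nu}},
\]
and likewise with $\nu$ replaced by $\tau$.

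Next I would raise this identity to the power $\tfrac{1}{\nu(1-\nu)}$; since that exponent is a scalar not depending on $i$, it passes through the product, so
\[
\left(\frac{\det(A+B)}{\det(A\#_\nu B+A\#_{1-\nu}B)}\right)^{\frac{1}{\nu(1-\nu)}}=\prod_{i=1}^{n}\left(\frac{1+\lambda_i}{\lambda_i^{\nu}+\lambda_i^{1-\nu}}\right)^{\frac{1}{\nu(1-\nu)}}.
\]
For each index $i$ I would then apply Corollary~\ref{full_multi_arith_geo_cor} with $a=1$ and $b=\lambda_i$: since $1\nabla\lambda_i=\tfrac{1+\lambda_i}{2}$ and $H_\nu(1,\lambda_i)=\tfrac{\lambda_i^{\nu}+\lambda_i^{1-\nu}}{2}$, the factor $2$ cancels in the ratio $\tfrac{1\nabla\lambda_i}{H_\nu(1,\lambda_i)}=\tfrac{1+\lambda_i}{\lambda_i^{\nu}+\lambda_i^{1-\nu}}$, and under the hypothesis $\nu\leq r(\tau)$ or $\nu\geq R(\tau)$ the corollary yields
\[
\left(\frac{1+\lambda_i}{\lambda_i^{\nu}+\lambda_i^{1-\nu}}\right)^{\frac{1}{\nu(1-\nu)}}\leq\left(\frac{1+\lambda_i}{\lambda_i^{\tau}+\lambda_i^{1-\tau}}\right)^{\frac{1}{\tau(1-\tau)}}.
\]
Multiplying these $n$ positive inequalities and repackaging the right side as $\bigl(\det(A+B)/\det(A\#_\tau B+A\#_{1-\tau}B)\bigr)^{1/(\tau(1-\tau))}$ closes the case $\nu\leq r(\tau)$ or $\nu\geq R(\tau)$. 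For $r(\tau)\leq\nu\leq R(\tau)$ I would run the same chain using instead the reversed inequality supplied by Corollary~\ref{full_multi_arith_geo_cor}.

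I do not expect any genuine obstacle: the argument is routine once the diagonalization is in place. The one point worth a moment's attention is that the exponents $\tfrac{1}{\nu(1-\nu)}$ and $\tfrac{1}{\tau(1-\tau)}$ are scalars independent of the eigenvalue index, which is precisely what allows the product and the exponentiation to commute; in particular, unlike the additive determinant statement of Theorem~\ref{det_thm}, this proof needs no appeal to the Minkowski determinant inequality and is therefore slightly shorter.
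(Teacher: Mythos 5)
Your proposal is correct and follows exactly the route the paper intends: it leaves the proof to the reader as ``similar to Theorem \ref{det_thm}'', i.e.\ reduce via $X=A^{-\frac12}BA^{-\frac12}$ to eigenvalues and apply the scalar Corollary \ref{full_multi_arith_geo_cor} with $a=1$, $b=\lambda_i$. Your observation that the ratio of determinants factors over the eigenvalues, so that (unlike the additive Theorem \ref{det_thm}) no Minkowski-type determinant inequality is needed, is accurate.
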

In particular, when $\tau=\frac{1}{2},$ we have the following multiplicative reverse of the determinant of the Heinz means
\begin{align}\label{det_reverse_multi_needed}
\det(A+B)\leq \left(\frac{\det(A+B)}{2^n\sqrt{\det(AB)}}\right)^{4\nu(1-\nu)}\det(A\#_{\nu}B+A\#_{1-\nu}B), 0\leq \nu\leq 1.
\end{align}
Notice that $4\nu(1-\nu)\leq 1, \leq \nu\leq 1.$ In this case, a weaker version of \eqref{det_reverse_multi_needed} is as follows
\begin{align*}
\sqrt{\det(AB)}\leq \det\frac{A\#_{\nu}B+A\#_{1-\nu}B}{2},
\end{align*}
which is the determinant version of the first inequality in \eqref{original_heinz_intro}.

\subsubsection{L\"{o}wener partial ordering}

Our final goal in this article is to present some matrix versions using the strongest comparison; the L\"{o}wener partial ordering. Recall that for two Hermitian matrices $A,B\in\mathbb{M}_n$, the notation $A\leq B$ is used to mean $B-A\in\mathbb{M}_n^{+}.$ This introduces a partial ordering on positive matrices and is considered as the strongest comparison. More precisely, when $A,B\in\mathbb{M}_n^{+}$ are such that $A\leq B$, one concludes that $\lambda_i(A)\leq \lambda_i(B),$ where $\lambda_i(X)$ is the $i-$th eigenvalue of $X$, when written in a decreasing order. Then the relation $\lambda_i(A)\leq \lambda_i(B)$ implies that $\tr A\leq \tr B, \det A\leq \det B$ and $\vert\vert\vert A\vert\vert\vert \leq\vert\vert\vert B\vert\vert\vert ,$ for any unitarily invariant norm on $\mathbb{M}_n.$ In this section, we use the notation $$H_t(A,B)=\frac{A\#_{t}B+A\#_{1-t}B}{2}, A,B\in\mathbb{M}_n^{++}.$$ A standard functional calculus argument applied on \eqref{original_heinz_intro} implies the following matrix version
\begin{equation}\label{matr_heinz_order}
A\#B\leq H_t(A,B)\leq A\nabla B, A,B\in\mathbb{M}_n^{++}, 0\leq t\leq 1.
\end{equation}
In the following theorem, we present a quadratic refinement and reverse of this inequality.
\begin{theorem}
Let $A,B\in\mathbb{M}_n^{++}$ and let $0<\nu,\tau<1.$ If $\nu\leq r(\tau)$ or $\nu\geq R(\tau),$ then
$$\frac{A\nabla B-H_{\tau}(A,B)}{\tau(1-\tau)}\leq \frac{A\nabla B-H_{\nu}(A,B)}{\nu(1-\nu)}.$$  On the other hand if $r(\tau)\leq\nu\leq R(\tau)$, the inequality is reversed.
\end{theorem}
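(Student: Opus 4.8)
The plan is to reduce the matrix (Löwner-order) statement to the scalar inequality of Corollary \ref{full_comp_heinz} by a standard functional-calculus argument, the same device used in \eqref{matr_heinz_order}. First I would reduce to the case $A = I$ by congruence: since $A$ is positive definite, write $A\nabla B - H_\tau(A,B) = A^{1/2}\bigl((I\nabla T) - H_\tau(I,T)\bigr)A^{1/2}$ where $T = A^{-1/2}BA^{-1/2}$, and similarly for $\nu$; because $X \mapsto A^{1/2}XA^{1/2}$ is order-preserving on Hermitian matrices and $\tau(1-\tau),\nu(1-\nu)>0$, the claimed inequality for $(A,B)$ is equivalent to the same inequality for $(I,T)$. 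Here I use that $A\#_t B = A^{1/2}T^t A^{1/2}$, so $H_t(A,B) = A^{1/2}H_t(I,T)A^{1/2}$ with $H_t(I,T) = \tfrac12(T^t + T^{1-t})$, and $A\nabla B = A^{1/2}(I\nabla T)A^{1/2}$.

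Next, working with $(I,T)$, I would diagonalize $T = W\operatorname{diag}(\lambda_1,\dots,\lambda_n)W^*$ with $W$ unitary and each $\lambda_i>0$. Then every term in sight is a function applied to $T$ via this diagonalization: $I\nabla T - H_\tau(I,T) = W\operatorname{diag}\bigl(1\nabla\lambda_i - H_\tau(1,\lambda_i)\bigr)W^*$, and likewise for $\nu$. So the matrix inequality
$$\frac{I\nabla T - H_\tau(I,T)}{\tau(1-\tau)} \le \frac{I\nabla T - H_\nu(I,T)}{\nu(1-\nu)}$$
is, after conjugating by $W^*$, the diagonal inequality asserting that for each $i$,
$$\frac{1\nabla\lambda_i - H_\tau(1,\lambda_i)}{\tau(1-\tau)} \le \frac{1\nabla\lambda_i - H_\nu(1,\lambda_i)}{\nu(1-\nu)}.$$
Each such scalar inequality is precisely Corollary \ref{full_comp_heinz} applied with $a=1$, $b=\lambda_i$, under the hypothesis $\nu\le r(\tau)$ or $\nu\ge R(\tau)$; the reversed scalar inequality under $r(\tau)\le\nu\le R(\tau)$ gives the reversed matrix inequality. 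Since a real diagonal matrix is positive semidefinite exactly when all its entries are nonnegative, the entrywise scalar inequalities assemble into the Löwner inequality for $(I,T)$, and then congruence by $A^{1/2}$ transports it back to $(A,B)$.

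I do not expect a genuine obstacle here: every ingredient (congruence invariance of the Löwner order, the identity $A\#_t B = A^{1/2}(A^{-1/2}BA^{-1/2})^t A^{1/2}$, and the fact that a Hermitian matrix built by functional calculus from $T$ is positive iff the corresponding scalar function is nonnegative on the spectrum of $T$) is routine. The only point requiring a word of care is that one must apply the scalar result of Corollary \ref{full_comp_heinz} separately for each eigenvalue $\lambda_i$ and note that the admissible range of $\nu$ relative to $\tau$ is the \emph{same} for all $i$, since $r(\tau)$ and $R(\tau)$ do not depend on $a,b$; this is exactly why the scalar corollary, stated uniformly in $a,b>0$, yields a clean matrix statement. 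Thus the proof is essentially a transcription of the scalar inequality through the spectral theorem, and I would present it in a few lines.
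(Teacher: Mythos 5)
Your proof is correct and follows exactly the route the paper intends: the paper defers to the proof of the subsequent theorem, which is precisely this functional-calculus argument (apply the scalar inequality of Corollary \ref{full_comp_heinz} with $a=1$, $b=\lambda$ to $T=A^{-1/2}BA^{-1/2}$ via the spectral theorem, then conjugate by $A^{1/2}$). Nothing further is needed.
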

The proof of this theorem follows a standard argument as in the next result.
\begin{theorem}
Let $A,B\in\mathbb{M}_n^{++}$ and let $0<\nu,\tau<1.$ If $\nu\leq r(\tau)$ or $\nu\geq R(\tau),$ then
$$\frac{A+BA^{-1}B-(A\#_{2\tau}B+A\#_{2-2\tau}B)}{\tau(1-\tau)}\leq \frac{A+BA^{-1}B-(A\#_{2\nu}B+A\#_{2-2\nu}B)}{\nu(1-\nu)}.$$  On the other hand if $r(\tau)\leq\nu\leq R(\tau)$, the inequality is reversed.
\end{theorem}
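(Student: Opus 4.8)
The plan is to reduce this L\"owner inequality to the scalar comparison already established in Corollary~\ref{full_comp_heinz}, via a congruence transformation followed by functional calculus. First I would set $C=A^{-1/2}BA^{-1/2}$, which belongs to $\mathbb{M}_n^{++}$, and record the identities that come directly from the defining formula $A\#_sB=A^{1/2}(A^{-1/2}BA^{-1/2})^sA^{1/2}$ (read for arbitrary real $s$, which is legitimate since $A^{-1/2}BA^{-1/2}>0$): namely $A^{-1/2}(A\#_sB)A^{-1/2}=C^{s}$ for every $s$, and in particular, since $BA^{-1}B=A\#_2B$, one gets $A^{-1/2}(A+BA^{-1}B)A^{-1/2}=I+C^2$. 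Consequently
$$A^{-1/2}\bigl(A+BA^{-1}B-(A\#_{2\tau}B+A\#_{2-2\tau}B)\bigr)A^{-1/2}=I+C^2-(C^{2\tau}+C^{2-2\tau}),$$
and similarly with $\nu$ in place of $\tau$.

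Next, since the map $Z\mapsto A^{-1/2}ZA^{-1/2}$ is a $*$-congruence, it preserves the L\"owner order, so the asserted inequality is equivalent to
$$\frac{I+C^2-(C^{2\tau}+C^{2-2\tau})}{\tau(1-\tau)}\leq\frac{I+C^2-(C^{2\nu}+C^{2-2\nu})}{\nu(1-\nu)}.$$
Both sides are functions of the single positive operator $C$, so writing $C=W\,\diag(\lambda_1,\dots,\lambda_n)\,W^*$ with $W$ unitary and $\lambda_i>0$, the operator inequality holds if and only if the corresponding scalar inequality holds for each eigenvalue $\lambda=\lambda_i$.

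Then I would put $d=\lambda^2>0$, so that $\lambda^{2\tau}=d^{\tau}$, $\lambda^{2-2\tau}=d^{1-\tau}$, and likewise for $\nu$; the scalar inequality becomes
$$\frac{1+d-(d^{\tau}+d^{1-\tau})}{\tau(1-\tau)}\leq\frac{1+d-(d^{\nu}+d^{1-\nu})}{\nu(1-\nu)},$$
which, after cancelling the factor $\tfrac12$, is exactly $\dfrac{d\nabla 1-H_{\tau}(d,1)}{\tau(1-\tau)}\leq\dfrac{d\nabla 1-H_{\nu}(d,1)}{\nu(1-\nu)}$, i.e. Corollary~\ref{full_comp_heinz} applied to the pair $(a,b)=(d,1)$ under the hypothesis $\nu\leq r(\tau)$ or $\nu\geq R(\tau)$. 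The range $r(\tau)\leq\nu\leq R(\tau)$ is handled identically, invoking the reversed scalar inequality of that corollary.

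There is no real obstacle here; the proof is essentially a transcription of the scalar result. The only points deserving a word of care are that $A\#_sB$ with $s\in(0,2)$ must be interpreted through the extended formula $A^{1/2}(A^{-1/2}BA^{-1/2})^sA^{1/2}$, and the identity $BA^{-1}B=A\#_2B$ used to match the endpoint terms. Equivalently, one could skip the congruence step and simply apply the preceding theorem with $B$ replaced by $BA^{-1}B\in\mathbb{M}_n^{++}$, using $A\#_t(BA^{-1}B)=A\#_{2t}B$ and $A\nabla(BA^{-1}B)=\tfrac12(A+BA^{-1}B)$.
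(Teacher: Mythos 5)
Your proposal is correct and follows essentially the same route as the paper: the paper likewise substitutes $X=A^{-1/2}BA^{-1/2}$, invokes the scalar comparison of Corollary~\ref{full_comp_heinz} with $a=1$ and $b$ replaced by $b^2$ via functional calculus, and then conjugates by $A^{1/2}$. Your version merely runs the reduction in the opposite direction (congruence first, then diagonalization), and your explicit remark that $BA^{-1}B=A\#_2B$ is a nice clarification the paper leaves implicit.
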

\begin{proof}
Letting $a=1$ in \eqref{second_heinz}, we get
$$\frac{1+b^2-(b^{2\tau}+b^{2-2\tau})}{\tau(1-\tau)}\leq \frac{1+b^2-(b^{2\nu}+b^{2-2\nu})}{\nu(1-\nu)}, b>0.$$
Now let $X=A^{-\frac{1}{2}}BA^{-\frac{1}{2}}.$ Then $X\in\mathbb{M}_n^{++}$. Therefore by applying monotonicity of continuous functions on Hermitian matrices, we get
$$\frac{I+X^2-(X^{2\tau}+X^{2-2\tau})}{\tau(1-\tau)}\leq \frac{I+X^2-(X^{2\nu}+X^{2-2\nu})}{\nu(1-\nu)}.$$ Conjugating both sides with $A^{\frac{1}{2}}$ implies the desired inequality when $\nu\leq r(\tau)$ or $\nu\geq R(\tau).$ The reversed version follows similarly.
\end{proof}
Notice that the above result allows comparison of means with parameters bigger than 1. This happens when $\nu,\tau>\frac{1}{2}$.

\end{document}